\newtheorem{mthm}{Theorem}
\renewcommand\themthm{\Alph{mthm}}
\newtheorem{thm}{Theorem}[section]
\newtheorem{prop}[thm]{Proposition}
\newtheorem{lem}[thm]{Lemma}
\theoremstyle{definition}
\newtheorem{example}[thm]{Example}
\theoremstyle{remark}
\newtheorem*{remark}{Remark}
\numberwithin{equation}{section}
\DeclareMathAlphabet{\mathpzc}{OT1}{pzc}{m}{it}
\renewcommand{\mathcal}[1]{\text{$\mathpzc{#1}$}}
\newcommand{\RR}{\mathbbm{R}}
\newcommand{\ZZ}{\mathbbm{Z}}
\newcommand{\ad}{\mathrm{ad}}
\newcommand{\Ad}{\mathrm{Ad}}
\newcommand{\Iso}{\group{Iso}}
\newcommand{\GL}{\group{GL}}
\newcommand{\TT}{\group{T}}
\newcommand{\SO}{\group{SO}}
\newcommand{\OO}{\group{O}}
\newcommand{\Aut}{\group{Aut}}
\newcommand{\Inn}{\group{Inn}}
\newcommand{\Euc}{\group{E}}
\newcommand{\Zen}{\mathrm{Z}}
\newcommand{\group}{\mathrm} 
\newcommand{\ac}[1]{\overline{#1}^{\rm z}}
\renewcommand{\rho}{\varrho}
\renewcommand{\tilde}{\widetilde}
\renewcommand{\epsilon}{\varepsilon}
\newcommand{\zsp}{\mathbf{0}} 
\newcommand{\met}{\langle\cdot,\cdot\rangle}
\newcommand{\one}{\{e\}}
\newcommand{\liealgebra}{\mathcal}
\newcommand{\frg}{\liealgebra{G}}
\newcommand{\frh}{\liealgebra{H}}
\newcommand{\fra}{\liealgebra{A}}
\newcommand{\frb}{\liealgebra{B}}
\newcommand{\frc}{\liealgebra{C}}
\newcommand{\frz}{\liealgebra{Z}}
\renewcommand{\frm}{\liealgebra{M}}
\newcommand{\frk}{\liealgebra{K}}
\newcommand{\frj}{\liealgebra{J}}
\newcommand{\frf}{\liealgebra{F}}
\newcommand{\frs}{\liealgebra{S}}
\newcommand{\frl}{\liealgebra{L}}
\newcommand{\frq}{\liealgebra{Q}}
\newcommand{\frr}{\liealgebra{R}}
\newcommand{\zen}{\liealgebra{Z}}
\newcommand{\euc}{\liealgebra{E}}
\newcommand{\so}{\liealgebra{SO}}
\newcommand{\p}{\mathsf{p}}
\newcommand{\I}{\mathrm{I}}
\renewcommand{\phi}{\varphi}
\newcommand{\gs}{\frg_{\rm s}}
\begin{document}


\title[Homogeneous spaces of finite volume]{Simply connected indefinite homogeneous spaces of finite volume}

\author[Baues]{Oliver Baues}
\address{Oliver Baues, Department of Mathematics, Chemin du Mus\'ee 23, University of Fribourg, CH-1700 Fribourg,  Switzerland} 
\email{oliver.baues@unifr.ch}

\author[Globke]{Wolfgang Globke}
\address{Wolfgang Globke, Faculty of Mathematics, University of Vienna, Oskar-Morgenstern-Platz 1, 1090 Vienna, Austria}
\email{wolfgang.globke@univie.ac.at}

\author[Zeghib]{Abdelghani Zeghib}
\address{Abdelghani Zeghib, \'Ecole Normale Sup\'erieure de Lyon, 
Unit\'e de Math\'ematiques Pures et Appliqu\'ees,
46 All\'ee d'Italie,
69364 Lyon,
France}
\email{abdelghani.zeghib@ens-lyon.fr}

\date{\today}


\subjclass[2010]{Primary 53C50; Secondary 53C30, 57S20}

\begin{abstract}
Let $M$ be a simply connected pseudo-Riemannian homogeneous space of
finite volume with isometry group $G$.
We show that $M$ is compact and that the solvable radical of $G$
is abelian and the Levi factor is a compact semisimple Lie group
acting transitively on $M$.
For metric index less than three, we find that the isometry group of
$M$ is compact itself.
Examples demonstrate that $G$ is not necessarily
compact for higher indices.
To prepare these results, we study Lie algebras with abelian solvable
radical and a nil-invariant symmetric bilinear form. For these,
we derive an orthogonal decomposition into three distinct types of
metric Lie algebras.
\end{abstract}

\maketitle

\tableofcontents


\section{Introduction and main results}
\label{sec:intro}


In this article we are interested in the isometry groups of simply
connected homogeneous pseudo-Riemannian manifolds of finite volume.
D'Ambra \cite[Theorem 1.1]{dambra} showed that a simply connected
compact analytic Lorentzian manifold (not necessarily homogeneous)
has compact isometry group, and she also gave an example of a simply
connected compact analytic manifold of metric signature $(7,2)$ that
has a non-compact isometry group.

Here we study homogeneous spaces for arbitrary metric signature.
Our main tool is the structure theory of the isometry Lie algebras
developed by the authors in \cite{BGZ}. 
The metric on the homogeneous space induces a symmetric bilinear
form on the isometry Lie algebra, and
as shown in \cite{BG,BGZ}, the existence of a finite invariant
measure then implies that this bilinear form is nil-invariant.
The first main result is the following theorem:

\begin{mthm}\label{mthm:geometric}
Let $M$ be a connected and simply connected pseudo-Riemannian
homogeneous space of finite volume, $G=\Iso(M)^\circ$,
and let $H$ be the stabilizer subgroup in $G$ of a point in $M$.
Let $G=KR$ be a Levi decomposition, where $R$ is the solvable radical
of $G$.
Then:
\begin{enumerate}
\item
$M$ is compact.
\item
$K$ is compact and acts transitively on $M$.
\item
$R$ is abelian.
Let $A$ be the maximal compact subgroup of $R$. Then $A=\Zen(G)^\circ$.
More explicitely, $R=A\times V$ where $V\cong\RR^n$ and $V^{K}=\zsp$.
\item
$H$ is connected.
If $\dim R>0$, then $H=(H\cap K) E$, where $E$ and $H\cap K$ are
normal subgroups in $H$, $(H\cap K)\cap E$ is finite,
and $E$ is the graph of a non-trivial homomorphism
$\phi:R\to K$, where the restriction $\phi|_A$ is injective.
\end{enumerate}
\end{mthm}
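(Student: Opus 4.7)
The idea is to translate the problem to Lie-algebra terms via the orbit map at a basepoint $p \in M$: the metric pulls back to a symmetric bilinear form $\langle\cdot,\cdot\rangle$ on $\frg$ whose radical is $\frh = \Lie(H)$, and this form is nil-invariant because $M$ has finite volume, by \cite{BG,BGZ}. Applying the structure theorem for Lie algebras with abelian solvable radical and nil-invariant symmetric bilinear form developed in the body of this paper, one obtains the key algebraic data: $\frr$ is abelian, $\frk$ is compact, and $\frr = \fra \oplus \frv$ as $\frk$-modules, with $\fra = \Lie(A)$ for the maximal compact subgroup $A \le R$ and $\frv^{\frk} = \zsp$. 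These algebraic facts feed everything below.

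For the identity $A = \Zen(G)^\circ$: since $K$ is connected semisimple and $\Aut(A)$ is discrete (as $A$ is a torus), the conjugation action of $K$ on $A$ is trivial, so $A \subseteq \Zen(G)^\circ$. Conversely, $\Zen(G)^\circ$ is connected, normal and abelian, hence contained in $R$; its projection to $\frv$ is $\frk$-fixed and therefore zero by $\frv^{\frk} = \zsp$, giving $\Zen(G)^\circ \subseteq A$. The product decomposition $R = A \times V$ with $V \cong \RR^n$ is then standard. Simple connectedness of $M$ together with the homotopy exact sequence of the principal bundle $H \to G \to M$ forces $\pi_0(H) = 1$, so $H$ is connected, settling the connectedness assertion in (4) and all of (3).

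The geometric heart of the argument, and what I expect to be the main obstacle, is the transitivity of $K$ on $M$: this amounts to $\frg = \frk + \frh$, equivalently that the projection $\frh \to \frr$ along $\frk$ is surjective. I would exploit that every $Y \in \frr$ is $\ad$-nilpotent of order two on $\frg$ (since $\frr$ is abelian and $[\frr,\frk] \subseteq \frr$), so nil-invariance of $\langle\cdot,\cdot\rangle$ produces rigid orthogonality relations between $\frr$ and its $K$-translates; coupled with $\frv^{\frk} = \zsp$ and non-degeneracy of the induced form on $\frg/\frh$, these should rule out a proper image of $\frh$ in $\frr$, as a missing $\frk$-invariant direction would either contribute to the radical $\frh$ or break the orthogonal decomposition. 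Once $K$ is transitive, $M = K/(H \cap K)$ is compact because $K$ is, yielding (1) and (2).

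The graph structure of $H$ in (4) then reads off from the surjectivity: write $\frh = (\frh \cap \frk) \oplus \frh'$ for a vector-space complement $\frh'$ projecting isomorphically to $\frr$, which then has the form $\{\phi_*(Y) + Y : Y \in \frr\}$ for a linear map $\phi_*\colon \frr \to \frk$. Compatibility of this complement with the bracket on $\frh$ (using $[\frr,\frr] = 0$) forces $\phi_*$ to be a Lie algebra homomorphism, integrating to $\phi\colon R \to K$ with $\Graph(\phi) = E$; normality of $E$ and $H \cap K$ in $H$, and the finiteness of their intersection, follow from this infinitesimal picture. Finally, injectivity of $\phi|_A$ comes from the effective action of $G$: any $a \in H \cap A$ is central in $G$ and fixes $p$, hence fixes all of $M$, so $a = e$. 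Non-triviality of $\phi$ when $\dim R > 0$ holds because $\phi \equiv 0$ would force $R \subseteq H$, contradicting effectiveness as $R$ is normal and nontrivial.
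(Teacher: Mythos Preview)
Your proposal has two substantive gaps that prevent it from working as written.

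First, the appeal to ``the structure theorem for Lie algebras with abelian solvable radical'' to conclude that $\frr$ is abelian and that the Levi factor is of compact type is circular: Theorem~\ref{mthm:abelian_rad1} \emph{assumes} $\frr$ abelian, and its conclusion explicitly allows non-compact semisimple factors $\frs_0$ and $\frs_1$. Nothing in the algebraic decomposition forces $\frs$ to vanish. In the paper, compactness of the Levi factor is obtained by a separate argument (Proposition~\ref{prop:S_trivial_M_compact}): since $M$ is simply connected $H$ is connected, hence $H\subseteq KR$ by Theorem~\ref{thm:BGZ_kernel}, while Mostow's density lemma forces $\Ad_\frg(S)$ into the Zariski closure of $\Ad_\frg(H)$; together these give $S=\{e\}$. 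Abelianness of $R$ is likewise not an input but a consequence (Proposition~\ref{prop:rad_abelian}): from $H\subseteq K\,\Zen(R)^\circ$ one gets a surjection $G/H\to R/\Zen(R)^\circ$, so $\Zen(R)^\circ$ is cocompact in $R$, and a short nilradical argument then forces $R$ abelian. Your plan never supplies either of these steps.

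Second, your route to compactness of $M$ and transitivity of $K$ is inverted relative to what actually works, and the proposed orthogonality argument is too vague. The paper first obtains compactness of $M=(KR)/H$ from Mostow's theorem on finite-volume quotients once $S$ is gone, and only then invokes Montgomery's theorem on simply connected compact homogeneous spaces to get that the maximal compact subgroup $K$ acts transitively; the identity $V^K=\zsp$ comes from the general structure theory of such spaces (Onishchik--Vinberg), not from the metric. Your sketch (``orthogonality relations\ldots should rule out a proper image of $\frh$ in $\frr$'') does not give a mechanism: the image of $\frh$ in $\frr$ is a $\frk$-submodule, but nothing you wrote explains why a proper $\frk$-submodule would violate non-degeneracy on $\frg/\frh$. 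Once transitivity of $K$ is established the correct way, your description of $H=(H\cap K)E$ and the injectivity of $\phi|_A$ are fine and match the paper's Proposition~\ref{prop:HKE}.
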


In Section \ref{sec:simplyconn} we give examples of isometry groups
of compact simply connected homogeneous $M$ with non-compact radical.
However, for metric index $1$ or $2$ the isometry
group of a simply connected $M$ is always compact:

\begin{mthm}\label{mthm:index2}
The isometry group of any simply connected pseudo-Riemannian
homogeneous manifold of finite volume with metric index $\ell\leq2$
is compact.
\end{mthm}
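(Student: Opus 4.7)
\emph{Proof plan.} By Theorem~\ref{mthm:geometric} we may write $G=K\ltimes R$ with $K$ compact semisimple transitive on $M$ and $R=A\times V$ abelian with $A$ central, and the stabilizer Lie algebra decomposes as $\frh=\frl\oplus\fre$ with $\frl=\frh\cap\frk$ and $\fre=\{\phi_*X+X:X\in\frr\}$ for a non-trivial $\phi_*:\frr\to\frk$. As $G=KR$ is compact exactly when $V=\zsp$, the statement reduces to showing $V=\zsp$ under the hypothesis $\ell\le 2$. Suppose for contradiction $\frv\ne 0$. The metric on $M$ induces a symmetric bilinear form $\langle\cdot,\cdot\rangle$ on $\frg$ whose radical contains $\frh$, and by \cite{BG,BGZ} this form is nil-invariant.

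From $\fre\subset\mathrm{rad}\langle\cdot,\cdot\rangle$ one reads off the graph identity $\langle\phi_*X,Y\rangle=-\langle X,Y\rangle$ for all $X\in\frr,Y\in\frg$; in particular the form on $\frr$ is the $\phi_*$-pullback of the form on $\frk$. Since $\frr$ is abelian and $[\frk,\frr]\subset\frr$, every $\ad X$ with $X\in\frr$ satisfies $(\ad X)^2=0$ on $\frg$, so nil-invariance makes $\ad X$ skew with respect to $\langle\cdot,\cdot\rangle$. Combining this with $\frv^K=0$ (from Theorem~\ref{mthm:geometric}(3)) and complete reducibility of $\frv$ as a $\frk$-module yields $[\frk,\frv]=\frv$, and a short computation then shows $\langle\frv,\frr\rangle=0$. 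Via the graph identity, $\phi_*(\frv)\subset\frk$ is equally $\langle\cdot,\cdot\rangle$-isotropic, and the image $\bar\frv$ of $\frv$ in $\frg/\frh$ is totally isotropic in the non-degenerate quotient form of index $\ell\le 2$, whence $\dim\bar\frv\le 2$.

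To complete the argument one invokes the orthogonal decomposition of abelian-radical metric Lie algebras with nil-invariant form into the three types announced in the abstract. That decomposition, applied $\frk$-isotypically, shows that each non-trivial $\frk$-irreducible summand of $\frv$ pairs via $\phi_*$ with itself to produce an independent hyperbolic pair in the quotient form on $\frg/\frh$, so each such summand contributes at least one unit to the index $\ell$. Since $\frk$ is compact semisimple and thus every non-trivial real $\frk$-submodule has dimension $\ge 2$, and since by effectiveness of $G$ on $M$ no $K$-stable subspace of $\frv$ can be contained in $\frh$, one concludes that $\frv\ne 0$ forces $\ell\ge 3$, contradicting $\ell\le 2$.

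\emph{Main obstacle.} The crux is the signature accounting in the last step: the bound $\dim\bar\frv\le\ell$ is not enough by itself, since $\frv$ can intersect $\frh$ non-trivially through $\ker\phi_*|_\frv$ and $\phi_*^{-1}(\frl)$, and those intersections are not a~priori $K$-stable. Converting the isotropic-dimension bound into a bound on $\dim\frv$ requires the three-type decomposition together with $\frk$-equivariance and effectiveness, in order to ensure that each irreducible $\frk$-summand of $\frv$ contributes an independent unit of index in $\frg/\frh$.
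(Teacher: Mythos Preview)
Your reduction to $V=\zsp$ and the isotropy computation $\langle\frv,\frr\rangle=0$ are correct, but the final index-counting step has a genuine gap that you yourself flag as the ``main obstacle'' and do not actually close. Your appeal to Theorem~\ref{mthm:abelian_rad1} does nothing here: since $\frs=\zsp$, that theorem returns only $\frg=\frg_1=\frk\ltimes\frr$, with no further splitting and no information about how $\frk$-isotypic pieces of $\frv$ contribute to the index. The claim that each irreducible $\frk$-summand of $\frv$ ``pairs via $\phi_*$ with itself to produce an independent hyperbolic pair'' is unjustified: $\phi_*:\frr\to\frk$ is a Lie algebra homomorphism but is not $\frk$-equivariant, so it need not respect isotypic components, and $\frv\cap\frh=\phi_*^{-1}(\frh\cap\frk)\cap\frv$ is not $\frk$-stable. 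Thus the bound $\dim\bar\frv\leq 2$ cannot be upgraded to a bound on $\dim\frv$ by your method. The paper instead invokes Theorem~\ref{thm:BGZ_index2} (from \cite{BGZ}), which for relative index $\ell\leq 2$ asserts directly that $\frg=\frk\times\frs\times\frr$ is a direct product of \emph{ideals}; then $[\frk,\frr]=\zsp$ combined with $V^K=\zsp$ forces $V=\zsp$ in one line.

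There is also a second omission: compactness of $G=\Iso(M)^\circ$ does not yet yield compactness of the full $\Iso(M)$. The paper observes that since $K$ is characteristic in $G$ and transitive on $M$, one may identify $\mathrm{T}_xM\cong\frk/(\frh\cap\frk)$, so the isotropy representation of $\Iso(M)_x$ factors through a closed subgroup of $\Aut(\frk)$, which is compact; hence $M$ carries an $\Iso(M)$-invariant Riemannian metric, and $\Iso(M)$ is compact.
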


As follows from Theorem \ref{mthm:geometric}, the isometry Lie algebra of a
simply connected pseudo-Riemannian homogeneous space of finite volume
has abelian radical.
This motivates a closer investigation of Lie algebras with abelian
radical that admit nil-invariant symmetric bilinear forms in
Section \ref{sec:abelianradical}.
Our main result is the following algebraic theorem:

\begin{mthm}\label{mthm:abelian_rad1}
Let $\frg$ be a Lie algebra whose solvable radical $\frr$ is
abelian.
Suppose $\frg$ is equipped with a nil-invariant symmetric bilinear
form $\met$ such that the kernel $\frg^\perp$ of $\met$
does not contain a non-trivial ideal of $\frg$.
Let $\frk\times\frs$ be a Levi subalgebra of $\frg$, where $\frk$
is of compact type and $\frs$ has no simple factors of compact
type.
Then $\frg$ is an orthogonal direct product of ideals
\[
\frg = \frg_1 \times \frg_2 \times \frg_3,
\]
with
\[
\frg_1=\frk\ltimes\fra,
\quad
\frg_2=\frs_0,
\quad
\frg_3=\frs_1\ltimes\frs_1^*,
\]
where $\frr=\fra\times\frs_1^*$ and $\frs=\frs_0\times\frs_1$ are
orthogonal direct products,
and $\frg_3$ is a metric cotangent algebra.
The restrictions of $\met$ to $\frg_2$ and $\frg_3$ are invariant
and non-degenerate.
In particular, $\frg^\perp\subseteq\frg_1$.
\end{mthm}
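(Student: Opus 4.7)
\emph{Invariance properties.} The approach exploits the $\frk \times \frs$-module structure of $\frr$ together with the constraints nil-invariance places on $\met$. Since $\frr$ is abelian, $(\ad X)^2 = 0$ for $X \in \frr$, so nil-invariance yields that $\ad X$ is $\met$-skew. The skew-adjoint operators form a Lie subalgebra of $\mathrm{End}(\frg)$, so the set of $Y \in \frs$ with $\ad Y$ skew is a Lie subalgebra of $\frs$ containing every nilpotent element; because $\frs$ has no compact simple factors it is generated by nilpotents, whence $\met$ is $\frs$-invariant. Thus $\met$, and consequently $\frg^\perp$, are invariant under $\frs + \frr$, although in general not under $\frk$.

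\emph{Module decomposition and orthogonalities.} By complete reducibility of the action of $\frk \times \frs$, write $\frr = \fra \oplus \frm$ as a $\frk \times \frs$-module, where $\fra := \frr^{\frs}$ and $\frm$ has no $\frs$-trivial summand; split $\frs = \frs_0 \times \frs_1$, with $\frs_0$ the sum of the simple ideals acting trivially on $\frr$. Using $\frs$-invariance together with the identities $\frs_0 = [\frs_0, \frs_0]$ and $\frs_1 = [\frs_1, \frs_1]$, a short commutator calculation yields all required orthogonalities, for instance $\met(\frk, \frm) = \met(\frk, [\frs_1, \frm]) = -\met([\frs_1, \frk], \frm) = 0$. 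Applying the same identity within $\frm \times \frm$ shows $\met|_{\frm \times \frm} = 0$, so $\frm$ is $\met$-isotropic.

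\emph{Main obstacle: $[\frk, \frm] = 0$.} First I would prove $\frm \cap \frg^\perp = 0$: for $X \in \frm \cap \frg^\perp$ and $k \in \frk$, nil-invariance applied to $X \in \frr$ gives $\met([k,X], Y) = \met(k, [X, Y])$, and because $[X, \frr] = 0$ while $[X, \frs + \frk] \subseteq \frm$ with $\met(\frk, \frm) = 0$, this vanishes for every $Y \in \frg$. Hence $[k, X] \in \frm \cap \frg^\perp$, so $\frm \cap \frg^\perp$ is $\frk$-, $\frs$- and $\frr$-invariant, i.e.\ an ideal of $\frg$ inside $\frg^\perp$, therefore zero. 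Combined with the isotropy of $\frm$, the pairing $\met \colon \frm \times \frs_1 \to \RR$ has trivial $\frm$-kernel; moreover, any simple $\frs_1$-submodule of $\frm$ not isomorphic to a summand of $\frs_1^*$ would pair trivially with $\frs_1$, hence also land in $\frm \cap \frg^\perp = 0$, and a multiplicity $\geq 2$ likewise yields a kernel excluded by the same argument. Therefore $\frm \cong \frs_1^*$ as an $\frs_1$-module. The $\frk$-action on $\frs_1^*$ commutes with $\frs_1$ and so factors through $\mathrm{End}_{\frs_1}(\frs_1^*) = \prod_i \mathrm{End}_{\frs_1^{(i)}}((\frs_1^{(i)})^*)$; by Schur's lemma, the endomorphism algebra of the adjoint representation of a real simple Lie algebra is either $\RR$ or $\CC$, so this centraliser is commutative. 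Since $\frk$ is perfect, its image is trivial, giving $[\frk, \frm] = 0$.

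\emph{Assembly.} With $[\frk, \frm] = 0$, the subspaces $\frg_1 = \frk \ltimes \fra$, $\frg_2 = \frs_0$ and $\frg_3 = \frs_1 \ltimes \frm$ are pairwise orthogonal ideals summing to $\frg$, and $\frg_3$ is the metric cotangent algebra of $\frs_1$. Non-degeneracy on $\frg_3$ was established in the previous step, while non-degeneracy on $\frg_2$ follows from the hypothesis: any simple ideal of $\frs_0$ in the kernel of $\met|_{\frg_2}$ is, by the orthogonalities, an ideal of $\frg$ contained in $\frg^\perp$. Consequently $\frg^\perp \subseteq \frg_1$.
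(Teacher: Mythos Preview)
Your argument is correct and arrives at the same decomposition, but your route to the pivotal fact $[\frk,\frm]=0$ differs from the paper's. The paper dispatches this in three lines: setting $\frc=[\frs,\frr]\cap[\frk,\frr]$, one has $\frc\perp\frr$ (by $\frr$-invariance together with $\frr$ abelian) and $\frc\perp\frk$, $\frc\perp\frs$ (from the relations $\frk\perp[\frs,\frg]$ and $\frs\perp[\frk,\frg]$), so $\frc$ is an ideal of $\frg$ contained in $\frg^\perp$ and hence zero, which is precisely $[\frk,\frm]=0$. The cotangent identification $\frm\cong\frs_1^*$ and the non-degeneracy of $\frs\ltimes\frm$ are then handled in separate short lemmas, the latter by invoking the kernel theorem $\frg^\perp\subseteq\frk\ltimes\zen(\gs)$ from~\cite{BGZ}. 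Your path---first $\frm\cap\frg^\perp=0$, then the module embedding $\frm\hookrightarrow\frs_1^*$ via the pairing, then a centroid argument (the endomorphism ring of the adjoint representation of a real simple Lie algebra is $\RR$ or $\CC$) to kill the $\frk$-action---is longer but more self-contained, and the coadjoint structure emerges along the way rather than in a separate step. Two minor points: the isotropy of $\frm$ requires $\frr$-invariance rather than $\frs$-invariance alone (write an element of $\frm$ as $[s,r]$ and use that $\ad r$ is skew with $[r,\frm]=0$); and the \emph{equality} $\frm\cong\frs_1^*$, not merely the embedding, tacitly uses that every simple factor of $\frs_1$ acts non-trivially on $\frm$, which holds by the very definition of $\frs_0$.
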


For the definition of metric cotangent algebra, see Section \ref{sec:nilinvariant}.
We call an algebra $\frg_1=\frk\ltimes\fra$ with
$\frk$ semisimple of compact type and $\fra$ abelian a Lie algebra
of \emph{Euclidean type}.
By Theorem \ref{mthm:geometric}, isometry Lie algebras of compact
simply connected pseudo-Riemannian homogeneous spaces are
of Euclidean type.
However, not every Lie algebra of Euclidean type appears as the
isometry Lie algebra of a compact pseudo-Riemannian homogeneous
space.
In fact, this is the case for the Euclidean Lie algebras
$\euc_n=\so_n\ltimes\RR^n$ with $n\neq 3$.

\begin{mthm}\label{thm:noSOnRn2}
The Euclidean group $\Euc_n=\OO_n\ltimes\RR^n$, $n\neq 1,3$,
does not have
compact quotients with a pseudo-Riemannian metric such that
$\Euc_n$ acts isometrically and almost effectively.
\end{mthm}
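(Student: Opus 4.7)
The plan is to argue by contradiction: suppose $M = \Euc_n/H$ is a compact quotient with a $\Euc_n$-invariant pseudo-Riemannian metric and almost effective $\Euc_n$-action, for some $n \geq 2$ with $n \neq 3$. Pulling the metric back via the evaluation $\euc_n \to T_{eH}M$ gives a symmetric bilinear form $\met$ on $\euc_n = \so_n \ltimes \RR^n$ whose kernel is the isotropy subalgebra $\frh = \Lie(H)$. Finite volume forces $\met$ to be nil-invariant, while almost effectivity means $\frh$ contains no non-trivial ideal of $\euc_n$. A short analysis, using irreducibility of the standard $\SO_n$-representation on $\RR^n$, shows that every non-zero ideal of $\euc_n$ contains $\RR^n$, so $\RR^n \not\subseteq \frh$.

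Next I would split the form along $\euc_n = \so_n \oplus \RR^n$ by writing
\[
\langle X_1+w_1,\,X_2+w_2\rangle = B(X_1,X_2)+C(X_1,w_2)+C(X_2,w_1)+D(w_1,w_2).
\]
Nil-invariance is the identity $\langle[v,Y_1],Y_2\rangle+\langle Y_1,[v,Y_2]\rangle=0$ for $v\in\RR^n$; using $[v,X+w]=-Xv$, a short calculation splits it into (i) $D\equiv0$ on $\RR^n\times\RR^n$ (because $\so_n\cdot\RR^n$ spans $\RR^n$ for $n\geq 2$), and (ii) the skew relation $C(X_1,X_2v)+C(X_2,X_1v)=0$. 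Combined with $\RR^n\not\subseteq\frh$, part (i) forces $C\not\equiv0$.

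The heart of the argument is to show that (ii) by itself forces $C\equiv0$ whenever $n\neq 3$. In the bases $X_{ij}=e_{ij}-e_{ji}$ of $\so_n$ and $\{e_k\}$ of $\RR^n$, set $c_{(ij),k}:=C(X_{ij},e_k)$. Specialising $X_1=X_2=X_{ij}$ in (ii) with $v=e_j$ or $v=e_i$ gives $c_{(ij),i}=c_{(ij),j}=0$. For $n\geq 4$ I exploit a \emph{fourth-index trick}: given distinct $i,j,k$, pick $l\notin\{i,j,k\}$ and apply (ii) with $X_1=X_{ij}$, $X_2=X_{kl}$, $v=e_l$; since $X_{ij}e_l=0$ and $X_{kl}e_l=e_k$, we obtain $c_{(ij),k}=0$. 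For $n=2$ only $c_{(12),1}$ and $c_{(12),2}$ exist, and both already vanish. Either way $C\equiv0$, contradicting the preceding step.

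The step most likely to require care is this final computation, and it is precisely what makes $n=3$ exceptional: with exactly three indices there is no fourth index available, so (ii) reduces to the single constraint $c_{(12),3}=c_{(23),1}=-c_{(13),2}$ and admits a one-parameter family of non-trivial $C$'s. This family is realised by the $\SO_3$-equivariant isomorphism $\so_3\cong\RR^3$ and underlies the compact $\Euc_3$-quotients mentioned in Section~\ref{sec:simplyconn}; it is the reason the theorem excludes $n=3$.
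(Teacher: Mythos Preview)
Your argument is correct and reaches the same conclusion as the paper, but by a genuinely different route. The paper first establishes Theorem~\ref{thm:noSOnRn} (that $\RR^n\subseteq\frg^\perp$ for any nil-invariant form on $\euc_n$, $n\geq 4$) via a structural argument: it invokes the classification of skew pairings for irreducible $\so_3$-modules (Proposition~\ref{prop:so3_skew_module}), uses $\so_4\cong\so_3\times\so_3$ to settle $n=4$, and then observes that the $\SO_n$-conjugates of an embedded $\so_4$ span $\so_n$ for $n\geq 5$. The case $n=2$ is handled separately by quoting the invariance result for solvable Lie algebras from~\cite{BG}. Your approach instead carries out a direct coordinate computation in the basis $X_{ij}=e_{ij}-e_{ji}$, deriving $c_{(ij),i}=c_{(ij),j}=0$ from the diagonal instance of the skew relation and then killing the mixed coefficients $c_{(ij),k}$ with the fourth-index trick; the same calculation absorbs $n=2$ without an extra case. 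The paper's method is more conceptual and explains the exceptional role of $n=3$ through the representation theory of $\so_3$, while yours is more elementary and self-contained, needing nothing beyond the nil-invariance of $\ad(v)$ for $v\in\RR^n$.
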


Note that $\Euc_n$ acts transitively and effectively on compact
manifolds with finite fundamental group, as we remark
at the end of Section \ref{sec:abelianradical}.

\subsection*{Notations and conventions}

For a Lie group $G$, we let $G^\circ$ denote the connected component
of the identity.
For a subgroup $H$ of $G$, we write $\Ad_{\frg}(H)$ for the adjoint
representation of $H$ on the Lie algebra $\frg$ of $G$, to distinguish
it from the adjoint representation $\Ad(H)$ on its own Lie algebra
$\frh$.

The \emph{solvable radical} $R$ of $G$ is the maximal connected
solvable normal subgroup of $G$.
The \emph{solvable radical} $\frr$ of $\frg$ is the maximal solvable
ideal of $\frg$.
The semisimple Lie algebra $\frf=\frg/\frr$ is a direct product
$\frf=\frk\times\frs$, where $\frk$ is a semisimple Lie algebra of
\emph{compact type}, meaning its Killing form is definite, and
$\frs$ is semisimple without factors of compact type.

The center of a group $G$, or a Lie algebra $\frg$, is denoted by
$\Zen(G)$, or $\zen(\frg)$, respectively.
Similarly, the centralizer of a subgroup $H$ in $G$
(or a subalgebra $\frh$ in $\frg$) is denoted by $\Zen_G(H)$
(or $\zen_\frg(\frh)$).

The action of a Lie group $G$ on a homogeneous space $M$ is
\emph{(almost) effective} if the stabilizer of any point in $M$ does
not contain a non-trivial (connected) normal subgroup of $G$.

If $V$ is a $G$-module, then we write
$V^G=\{v\in V\mid gv=v \text{ for all } g\in G\}$ for the
module of \emph{$G$-invariants}.
Similary, $V^\frg=\{v\in V\mid xv=0 \text{ for all } x\in\frg\}$
for a $\frg$-module.

For direct products of Lie algebras $\frg_1$, $\frg_2$ we write
$\frg_1\times\frg_2$, whereas $\frg_1+\frg_2$ or $\frg_1\oplus\frg_2$
refers to sums as vector spaces.

\subsection*{Acknowledgements}

Wolfgang Globke was partially supported by the Australian Research Council grant {DE150101647} and the Austrian Science Foundation FWF grant I 3248.


\section{Nil-invariant bilinear forms}
\label{sec:nilinvariant}

Let $\frg$ be a finite-dimensional real Lie algebra,
let $\Inn(\frg)$ denote the inner auto\-morphism group of $\frg$
and $\ac{\Inn(\frg)}$ its Zariski closure in $\Aut(\frg)$.
A symmetric bilinear form $\met$ on $\frg$ is called \emph{nil-invariant} if
for all $x_1,x_2\in\frg$,
\begin{equation}
\langle \phi x_1, x_2\rangle
=
-\langle x_1,\phi x_2\rangle
\label{eq:nilinvariant}
\end{equation}
for all nilpotent elements $\phi$ of the Lie algebra of
$\ac{\Inn(\frg)}$.
For a subalgebra $\frh$ of $\frg$, we say $\met$ is \emph{$\frh$-invariant}
if  for all $x\in\frh$, $\ad_\frg(x)$ is skew-symmetric for $\met$.

The \emph{kernel} of $\met$ is the subspace
\[
\frg^\perp = \{x\in\frg\mid \langle x,y\rangle=0\text{ for all }y\in\frg\}.
\]
We use a Levi decomposition of $\frg$,
\[
\frg = (\frk\times\frs)\ltimes\frr,
\]
where $\frk$ is semisimple of compact type, $\frs$ is semisimple without
factors of compact type, and $\frr$ is the solvable radical of $\frg$.
Let further $\gs=\frs\ltimes\frr$.

\begin{thm}[\mbox{\cite[Theorem A]{BGZ}}]\label{thm:BGZ_invariance}
Let $\frg$ be a finite-dimensional real Lie algebra with
nil-invariant symmetric bilinear form $\met$.
Let $\met_{\gs}$ denote the restriction of $\met$ to $\gs$.
Then:
\begin{enumerate}
\item
$\met_{\gs}$ is invariant by the adjoint action of $\frg$ on $\gs$.
\item
$\met$ is invariant by the adjoint action of $\gs$.
\end{enumerate}
\end{thm}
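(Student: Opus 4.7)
The plan is to study the algebraic hull $\bar H := \ac{\Inn(\frg)}$ and its Lie algebra $\frh := \Lie(\bar H) \subset \gl(\frg)$, which is an algebraic Lie subalgebra containing $\ad(\frg)$, and to show that a substantially larger class of elements of $\frh$ than just the nilpotent ones must act skew-symmetrically for $\met$. The cornerstone is the following bracket-of-nilpotents trick: if $h = [e,f]$ with $e,f \in \frh$ both nilpotent, then
\[
\langle [e,f]x,y\rangle = \langle efx,y\rangle - \langle fex,y\rangle = \langle x,fey\rangle - \langle x,efy\rangle = -\langle x,[e,f]y\rangle,
\]
so $h$ is skew. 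Since $\frh$ is algebraic, Jordan decomposition stays inside $\frh$; and any $\RR$-split semisimple element lying in the derived subalgebra $[\frh,\frh]$ can be written as a $\ZZ$-linear combination of coroots $h_\alpha = [e_\alpha, f_\alpha]$ with nilpotent root vectors $e_\alpha, f_\alpha \in \frh$, so every such hyperbolic element is skew as well.

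For claim (2), consider first $x \in \frs$. Because $\frs$ has no compact factors, the $\RR$-linear span of its nilpotent elements is a nonzero ideal of $\frs$, hence equals $\frs$. By Jacobson's theorem, each $\ad_\frg(n)$ for nilpotent $n \in \frs$ is a nilpotent operator on $\frg$, so $\ad_\frg(x)$ is an $\RR$-linear combination of nilpotents in $\frh$ and therefore skew. For $x \in \frr$, Jordan-decompose $\ad_\frg(x) = s + n$ in $\frh$ and split $s = s_h + s_c$ into hyperbolic and elliptic components. The parts $n$ and $s_h$ are skew by the preceding step. For $s_c$: it lies in some anisotropic torus $T_c \subset \bar H$; I would perform a $T_c$-weight decomposition of $\frg$ and of $\frh$ over $\CC$ and apply nil-invariance to the nilpotent weight vectors sitting in the nontrivial $T_c$-weight spaces of $\frh$ (abundant because $\ad(\frg)\subseteq\frh$ generates $\frh$ algebraically) to force $\met$ to pair only opposite $T_c$-weight spaces, which in turn implies $s_c$ is skew.

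Claim (1) then reduces to showing that $\ad_\frg(y)|_\gs$ is skew for $\met|_\gs$ when $y \in \frk$. Since $\ad(\frk)$ embeds as a compact subgroup of $\Aut(\gs)$ acting by Lie algebra automorphisms, one averages $\met|_\gs$ over this compact group to produce an $\ad(\frk)$-invariant symmetric form which, by claim (2) already proven for $\gs$, is automatically $\ad(\gs)$-invariant; a rigidity argument on the $\ad(\frk)$-isotypic components of $\gs$ then identifies the averaged form with $\met|_\gs$ itself. The main obstacle throughout is controlling the elliptic component $s_c$ appearing for $x \in \frr$: unlike the nilpotent and hyperbolic pieces, $s_c$ has no direct expression as a bracket of nilpotents, and its skew-symmetry must be extracted indirectly via the weight-space interplay between the anisotropic torus $T_c$ and the nilpotent content of the algebraic hull, which is the most delicate step of the argument.
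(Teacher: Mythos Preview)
The paper does not prove this theorem; it merely quotes it from \cite{BGZ}. So there is no in-paper proof to compare against. What remains is to assess whether your sketch is correct, and here there is a genuine gap.

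Your treatment of $x\in\frs$ is fine: $\frs$ has no compact factors, so it is spanned by ad-nilpotent elements and skewness follows directly from nil-invariance. The problem is the case $x\in\frr$. You Jordan-decompose $\ad_\frg(x)=s_h+s_c+n$ and declare $s_h$ skew because an $\RR$-split semisimple element of $[\frh,\frh]$ is a combination of coroots $h_\alpha=[e_\alpha,f_\alpha]$. But you never verify that $s_h\in[\frh,\frh]$, and in general it is not. Take $\frg=\RR t\ltimes\RR^2$ with $[t,e_1]=e_1$, $[t,e_2]=-e_2$. Here $\ad(\frg)$ is already algebraic, so $\frh=\ad(\frg)$, and $[\frh,\frh]=\Span\{\ad(e_1),\ad(e_2)\}$ does \emph{not} contain $\ad(t)=s_h$. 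Nil-invariance nonetheless forces $\ad(t)$ to be skew (one checks $\langle e_i,e_i\rangle=0$ from skewness of the nilpotent $\ad(e_i)$), but this comes from a weight-space orthogonality argument, not from expressing $\ad(t)$ as a bracket of nilpotents. Your coroot mechanism simply does not apply to the radical.

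The remaining steps are also underdeveloped. For the elliptic part $s_c$ you gesture at a $T_c$-weight decomposition and ``nilpotent weight vectors in the nontrivial weight spaces of $\frh$'', but for $x\in\frr$ there is no reason those weight spaces contain enough nilpotents to force the required orthogonality; you would need to work with weight spaces of $\frg$ itself (whose nonzero-weight vectors are ad-nilpotent because $s_c$ is a derivation) rather than of $\frh$. Finally, your argument for claim~(1) averages $\met|_{\gs}$ over the compact group generated by $\Ad(\frk)$ and then asserts a ``rigidity argument on isotypic components'' to recover the original form. There is no such rigidity in general: $\gs$-invariant symmetric bilinear forms on $\gs$ are far from unique, so averaging can genuinely change the form. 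You need a direct argument that $\ad(y)|_{\gs}$ is skew for $y\in\frk$, not an averaging trick.
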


This implies some orthogonality relations that will be
useful later on:
\begin{equation}
\frs\perp[\frk,\frg],
\quad
\frk\perp[\frs,\frg].
\label{eq:orthogonal}
\end{equation}


\begin{thm}[\mbox{\cite[Corollary C]{BGZ}}]\label{thm:BGZ_kernel}
Let $\frg$ be a finite-dimensional real Lie algebra with
nil-invariant symmetric bilinear form $\met$, where we further
assume that $\frg^\perp$ does not contain any non-zero ideal of $\frg$.
Let $\zen(\gs)$ denote the center of $\gs$. Then
\[
\frg^\perp\subseteq\frk\ltimes\zen(\gs)
\quad\text{ and }\quad
[\frg^\perp,\gs]\subseteq\zen(\gs)\cap\frg^\perp.
\]
\end{thm}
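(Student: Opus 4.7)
The plan is to analyze $J := \frg^\perp \cap \gs$ using the $\gs$-invariance of $\met$ from Theorem \ref{thm:BGZ_invariance}(2): this makes $\frg^\perp$ stable under $\ad(\gs)$, and since $\gs$ is a $\frg$-ideal, this forces $[\gs, \frg^\perp] \subseteq J$. In particular $J$ is an ideal of $\gs$. From this alone, once we establish the central claim $J \subseteq \zen(\gs)$, the second containment $[\frg^\perp, \gs] \subseteq \zen(\gs) \cap \frg^\perp$ is immediate.

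The central claim $J \subseteq \zen(\gs)$ is equivalent to $I := [\gs, J] \subseteq J$ being zero. The strategy is to exhibit $I$ as a $\frg$-ideal contained in $\frg^\perp$; the hypothesis on $\frg^\perp$ then forces $I = 0$. That $I$ is $\ad(\gs)$-invariant is automatic from $[\gs, J] \subseteq J$, so the obstacle is $\ad(\frk)$-invariance, which by the Jacobi identity reduces to showing $[\frk, J] \subseteq J$. For $w \in J$ and $k \in \frk$, the bracket $[k,w]$ lies in $\frr \subseteq \gs$ since $[\frk, \frs] = 0$, and the pairing $\langle [k,w], z\rangle$ against any $z \in \gs$ vanishes by the $\frg$-invariance of $\met_\gs$ (Theorem \ref{thm:BGZ_invariance}(1)) applied to $w \in \frg^\perp$. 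The hardest step, and the main obstacle I anticipate, is controlling the pairing $\langle [k,w], \frk\rangle$; the approach is to combine the orthogonality relations \eqref{eq:orthogonal} with nil-invariance applied to the Jordan nilpotent components of $\ad(k+r)$ for suitable $r \in \frr$ to rule out this pairing.

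For the first containment $\frg^\perp \subseteq \frk \ltimes \zen(\gs)$, I would take $x = k + u \in \frg^\perp$ with $u = s_0 + r_0 \in \frs \oplus \frr$ and exploit the already-established second containment: $[y, x] \in \zen(\gs)$ for all $y \in \gs$. Specializing to $y \in \frs$ and using $[\frs, \frk] = 0$ gives $[\frs, u] \subseteq \zen(\gs)$. Extracting $\frs$-components kills $s_0 \in \zen(\frs) = 0$, so $u = r_0 \in \frr$; a Kostant-type argument, using semisimplicity of $\frs$ in the form $[\frs, \frr] \cap \frr^\frs = 0$, then places $r_0$ in $\frr^\frs$. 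A parallel argument with $y \in \frr$ (which collapses to the abelian case pertinent to the rest of the paper, where $\zen(\gs) = \frr^\frs$) shows $r_0 \in \zen(\frr)$, so $u \in \frr^\frs \cap \zen(\frr) = \zen(\gs)$, finishing the proof.
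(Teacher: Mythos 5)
The paper does not actually prove Theorem \ref{thm:BGZ_kernel}; it is imported verbatim as Corollary C of \cite{BGZ}, so there is no in-paper proof to compare yours against and your attempt must stand on its own. The soft parts of your reduction are correct: $\gs$-invariance of $\met$ (Theorem \ref{thm:BGZ_invariance}(2)) does give $[\gs,\frg^{\perp}]\subseteq\frg^{\perp}\cap\gs=:J$, so the second containment reduces to $J\subseteq\zen(\gs)$; the pairing $\langle[\frk,J],\gs\rangle=0$ does follow from Theorem \ref{thm:BGZ_invariance}(1); and your complete-reducibility argument placing the $\frr$-component $r_0$ of an element of $\frg^{\perp}$ inside $\frr^{\frs}$ is fine.

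However, the proof has two genuine gaps, both located exactly where the real content of \cite{BGZ} lies, namely the interaction of $\frk$ with $\frr$. (a) To make $I=[\gs,J]$ a $\frg$-ideal you need $[\frk,J]\subseteq J$, i.e.\ $\langle[\frk,J],\frk\rangle=0$. You flag this as the main obstacle and only gesture at ``nil-invariance applied to the Jordan nilpotent components of $\ad(k+r)$.'' This does not work as stated: for $k\in\frk$ of compact type, $\ad_{\frg}(k)$ generates a relatively compact one-parameter group in $\Aut(\frg)$, hence is semisimple, so its nilpotent Jordan part vanishes and nil-invariance says nothing about it; and since $\ad(k)$ and $\ad(r)$ need not commute, the nilpotent Jordan part of $\ad(k+r)$ bears no usable relation to $[k,\cdot]|_{\frr}$. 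Note moreover that the pairing $\langle\frk,[\frk,\frr]\rangle$ is genuinely nonzero in examples (the metric cotangent algebra $\so_3\ltimes\so_3^{*}$, cf.\ Example \ref{ex:so3R3}), so any argument must exploit $w\in\frg^{\perp}$ in a way your setup has not yet captured; in \cite{BGZ} this step rests on a classification of skew pairings of compact-type modules in the spirit of Proposition \ref{prop:so3_skew_module}, not on formal manipulations. (b) In the first containment, the final step $r_0\in\zen(\frr)$ is only argued ``in the abelian case'': from $[r',k+r_0]\in\zen(\gs)$ you cannot separate $[r',k]$ from $[r',r_0]$ when $\frk$ acts nontrivially on $\frr$. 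This restriction is not harmless here, because the paper applies the theorem (Proposition \ref{prop:rad_abelian}) precisely in order to prove that the radical is abelian, so assuming abelianness at that point would be circular.
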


We say that $\met$ has \emph{relative index} $\ell$ if the induced
scalar product on $\frg/\frg^\perp$ has index $\ell$.
For relative index $\ell\leq 2$, we have a general structure theorem
for $\frg$. 

\begin{thm}[\mbox{\cite[Theorem D]{BGZ}}]\label{thm:BGZ_index2}
Let $\frg$ be a finite-dimensional real Lie algebra with
nil-invariant symmetric bilinear form $\met$ of relative index
$\ell\leq 2$, and assume that $\frg^\perp$ does not contain any non-zero
ideal of $\frg$. Then:
\begin{enumerate}
\item
The Levi decomposition of $\frg$ is a direct sum of ideals
$\frg=\frk\times\frs\times\frr$.
\item
$\frg^\perp$ is contained in $\frk\times\zen(\frr)$ and
$\frg^\perp\cap\frr=\zsp$.
\item
$\frs\perp(\frk\times\frr)$ and $\frk\perp[\frr,\frr]$.
\end{enumerate}
\end{thm}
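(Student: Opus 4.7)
The plan is to combine Theorems \ref{thm:BGZ_invariance} and \ref{thm:BGZ_kernel} with the relative index bound $\ell\leq 2$ to force both $[\frs,\frr]=0$ and $[\frk,\frr]=0$, so the Levi decomposition splits as a direct product of ideals. The key technical step is a weight-space analysis performed on the quadratic quotient Lie algebra $\bar{\gs}=\gs/(\gs\cap\frg^\perp)$.

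By Theorem \ref{thm:BGZ_kernel}, the intersection $\gs\cap\frg^\perp$ is central in $\gs$, so $\bar{\gs}$ is a well-defined Lie algebra carrying a non-degenerate $\gs$-invariant symmetric bilinear form inherited via Theorem \ref{thm:BGZ_invariance}(2). The natural map $\bar{\gs}\hookrightarrow\frg/\frg^\perp$ is an isometric embedding, so the induced form on $\bar{\gs}$ still has index at most $\ell\leq 2$. To show $[\frs,\frr]=0$, fix a non-trivial $\RR$-split Cartan subalgebra $\fra\subseteq\frs$, which exists because $\frs$ has no factors of compact type. Since each $h\in\fra$ lies in $\gs$ and $\met|_{\gs}$ is $\gs$-invariant, $\ad(h)$ is skew on $\bar{\gs}$, and the standard identity $(\lambda+\mu)(h)\langle w_\lambda,w_\mu\rangle=0$ forces weight spaces $\bar{\gs}_\lambda$ and $\bar{\gs}_\mu$ to be orthogonal unless $\lambda+\mu=0$. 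In particular, every non-zero weight space is totally isotropic. A non-trivial irreducible $\frs$-summand of $\bar{\frr}$ has at least two opposite non-zero $\fra$-weights, contributing a totally isotropic subspace of $\bar{\gs}$ of dimension at least two; together with the isotropic data already carried by $\frs$ itself (its own non-zero root spaces in $\bar{\gs}$) this forces the total Witt index to exceed $2$, contradicting the bound. Hence $[\frs,\bar{\frr}]=0$, i.e., $[\frs,\frr]\subseteq\gs\cap\frg^\perp\subseteq\zen(\gs)$. Since $\frs=[\frs,\frs]$ acts trivially on $\zen(\gs)$, iterating gives $[\frs,\frr]=0$, so $\gs=\frs\times\frr$ as a direct product of ideals.

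With this splitting $\zen(\gs)=\zen(\frr)$, and Theorem \ref{thm:BGZ_kernel} refines to $\frg^\perp\subseteq\frk\ltimes\zen(\frr)$. For $\frg^\perp\cap\frr=0$: any $v$ in this intersection lies in $\zen(\frr)$ and is annihilated by $\frs$, so its $\ad(\frk)$-orbit remains in $\zen(\frr)$; using the $\frg$-invariance of $\met|_{\gs}$ from Theorem \ref{thm:BGZ_invariance}(1) and the orthogonality $\frs\perp[\frk,\frg]$ from \eqref{eq:orthogonal}, the $\frk$-orbit also stays orthogonal to $\gs$; one then verifies that $\RR\text{-span}(\ad(\frk)v)+\RR v$ generates a non-zero $\frg$-invariant subspace of $\frg^\perp$, which the no-ideal hypothesis rules out. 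The analogous argument applied to the $\frk$-invariant subspace $[\frk,\frr]\subseteq\frr$, combined with $\frs\perp[\frk,\frr]$ and the index bound applied to the associated isotropic subspace in $\bar{\gs}$, yields $[\frk,\frr]=0$. This gives the direct-product decomposition (1); parts (2) and (3) are then immediate from Theorem \ref{thm:BGZ_kernel}, \eqref{eq:orthogonal}, and the vanishing of $[\frk,\frr]$.

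The main obstacle is the weight-space counting step: one must carefully compare the isotropic subspaces coming from $\frs$ itself (via root-space structure and, if present, a Killing-type contribution) against those forced by a hypothetical non-trivial $\frs$-summand in $\bar{\frr}$, so as to exceed the bound $\ell\leq 2$; here the fact that $\bar{\gs}$ need not orthogonally split $\frs\perp\bar{\frr}$ complicates the bookkeeping. A secondary difficulty is the $\frk$-analysis, since $\ad(\frk)$ is never nilpotent (compact type) and so nil-invariance cannot be invoked directly on $\frk$-orbits; one is forced to rely on the $\frg$-invariance of $\met|_{\gs}$ together with the no-ideal hypothesis on $\frg^\perp$.
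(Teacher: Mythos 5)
This statement is not proved in the present paper at all: it is quoted verbatim as \cite[Theorem D]{BGZ} and used as an imported black box, so there is no in-paper argument to compare yours against. Judged on its own terms, your sketch follows a plausible general strategy (reduce to the quadratic algebra $\gs/(\gs\cap\frg^\perp)$ and exploit total isotropy of weight spaces), but it has genuine gaps at exactly the points you flag, and these are not minor bookkeeping issues.

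The central problem is the counting step. Your claim is that a non-trivial irreducible $\frs$-summand of $\frr$, together with the root spaces of $\frs$ itself, forces a totally isotropic subspace of dimension $>2$. Already for $\frs=\sl_2(\RR)$ acting on a $2$-dimensional summand $W$ of $\frr$, the sum of the positive $\fra$-weight spaces has dimension exactly $2$ (one from the positive root space of $\sl_2(\RR)$, one from $W$), so no contradiction with $\ell\leq 2$ is obtained by dimension count alone. In that case the actual contradiction comes from representation theory of the pairing: since $\frr$ is forced to be totally isotropic, non-degeneracy requires a non-zero $\frs$-equivariant map $W\to\frs^*$ (and $W\perp\frk$ by \eqref{eq:orthogonal}), which fails because $W\not\cong\frs^*$ as $\frs$-modules; for $W\cong\frs^*$ one instead lands in the cotangent-algebra situation and must rule it out by the index bound. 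None of this is in your sketch, and the cases where $\frr$ is non-abelian (so that $[\frr,\frr]$ contributes further isotropic directions and $\zen(\gs)\neq\zen(\frr)$ a priori) are not addressed either. A second unproved step is the pair of claims $\frg^\perp\cap\frr=\zsp$ and $[\frk,\frr]=\zsp$: your argument needs $\mathrm{ad}(\frk)v\perp\frk$ for $v\in\frg^\perp\cap\frr$, which follows neither from nil-invariance (elements of $\frk$ have no non-trivial nilpotent Jordan part, as you note) nor from Theorem \ref{thm:BGZ_invariance}(1) (which only controls the restriction of $\met$ to $\gs\times\gs$, not the $\frk\times\frk$ block); the phrase ``one then verifies'' is covering precisely the hard point. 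Finally, a small technical slip: the form induced on $\gs/(\gs\cap\frg^\perp)$ need not be non-degenerate, since the kernel of $\met|_{\gs}$ can be strictly larger than $\gs\cap\frg^\perp$; this does not affect the index bound but does affect any appeal to Witt-index equalities.
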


\subsection{Cotangent algebras}
\label{subsec:cotangent}
Let $\frl$ be a Lie algebra.
A \emph{cotangent algebra} constructed from $\frl$
is a Lie algebra $\frg=\frl\ltimes\frl^*$ where $\frl$ acts on
its dual space $\frl^*$ by its coadjoint representation.
We call $\frg$ a \emph{metric cotangent algebra} if it has a
non-degenerate invariant scalar product $\met$ such that $\frl^*$
is totally isotropic.

%
\subsection[Invariance by $\frg^\perp$]{Invariance by $\boldsymbol{\frg^\perp}$}

We are mainly interested in nil-invariant bi\-linear forms $\met$ on
$\frg$ induced by pseudo-Riemannian metrics on homogeneous spaces.
In this case, $\met$ is invariant by the stabilizer subalgebra
$\frg^\perp$. We can then further sharpen the
statement of Theorem \ref{thm:BGZ_kernel}.

\begin{prop} \label{prop:gperp_inv}
Let $\frg$ and $\met$ be as in Theorem \ref{thm:BGZ_kernel}.
If in addition $\met$ is $\frg^{\perp}$-invariant,
then
\[
[\frg^{\perp}, \gs] = \zsp.
\]
\end{prop}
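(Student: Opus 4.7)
The plan is to set $\fri := [\frg^\perp, \gs]$ and show $\fri$ sits inside a $\frg$-ideal that must vanish. By Theorem~\ref{thm:BGZ_kernel} we already have $\fri \subseteq \zen(\gs) \cap \frg^\perp$. The new ingredient is that $\frg^\perp$-invariance makes $\ad(x)$ skew for $x \in \frg^\perp$; combined with $[x,\gs]\subseteq \fri \subseteq \frg^\perp$, this gives $\langle u,[x,v]\rangle = 0$ for all $u\in\gs$ and $v\in\frg$, so $[\frg^\perp,\frg]$ lies in the $\met$-annihilator $N:=\{w\in\frg : \langle w,\gs\rangle = 0\}$ of $\gs$ in $\frg$. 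In particular $\fri\subseteq \gs\cap N =: \frj$, the radical of $\met|_\gs$. By Theorem~\ref{thm:BGZ_invariance}(1), $\met|_\gs$ is $\frg$-invariant, so $\frj$ is a $\frg$-ideal contained in $\gs$.

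The goal then becomes to show $\frj \subseteq \frg^\perp$: the hypothesis that $\frg^\perp$ contains no non-zero $\frg$-ideal forces $\frj = 0$, and with it $\fri\subseteq\frj=0$. Since $\frj\perp\gs$ by construction, proving $\frj\subseteq\frg^\perp$ reduces to $\langle\frj,\frk\rangle = 0$. Using complete reducibility of the $\frs$-action, decompose $\frj = \frj^\frs \oplus [\frs,\frj]$; by \eqref{eq:orthogonal} we have $\frk\perp[\frs,\frg]\supseteq[\frs,\frj]$, so $[\frs,\frj]\subseteq\frg^\perp$ already, and the task reduces further to $\frj^\frs \subseteq \frg^\perp$.

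For $u\in\frj^\frs\subseteq\frr^\frs$, the $\gs$-invariance from Theorem~\ref{thm:BGZ_invariance}(2) combined with $u\perp\gs$ readily yields $[\gs,u]\subseteq \frg^\perp$, but the standard orthogonality toolkit only supplies $u\perp[\gs,\frk]$, not $u\perp\frk$ directly. To close this gap I would apply $\frg^\perp$-invariance once more: for any $x\in\frg^\perp$, writing $[x,y]$ for $y\in\frk$ in terms of its $\frk$- and $\zen(\gs)$-components (using $\frg^\perp\subseteq\frk\ltimes\zen(\gs)$) and using skew-symmetry of $\ad(x)$ to move invariance between slots, one should obtain enough orthogonality relations among $u$, $\frk$ and $\zen(\gs)$ to force $u\perp\frk$; alternatively, one shows that the $\frg$-ideal generated by $\frj^\frs$ already lies in $\frg^\perp$ by propagating $\frg^\perp$-membership through the chain $\frj^\frs,\ [\frr,\frj^\frs],\ [\frg,[\frr,\frj^\frs]],\dots$, after which the standing hypothesis kills it.

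The main obstacle is precisely this last step: $\frg^\perp$ is not a priori $\frk$-stable, so no single round of invariance delivers $\frj^\frs\perp\frk$, and one has to combine $\frg^\perp$-invariance with the structure of $\frr^\frs$ and an ideal-generation argument inside $\frj$ to squeeze $\frj^\frs$ into $\frg^\perp$ and so trigger the ideal-triviality hypothesis.
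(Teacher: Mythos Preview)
Your reductions up to defining $\frj$ as the radical of $\met|_{\gs}$ are correct: $\frj$ is indeed a $\frg$-ideal of $\gs$ containing $\fri=[\frg^\perp,\gs]$. However, the remaining goal $\frj\subseteq\frg^\perp$ is not a technical difficulty to be overcome; it is false in general, so the ideal you chose is too large. Take $\frg=\so_3\ltimes\so_3^*$, the metric cotangent algebra of $\so_3$ (Section~\ref{subsec:cotangent}), with $\met$ the invariant non-degenerate dual pairing in which both $\so_3$ and $\so_3^*$ are totally isotropic. Here $\frk=\so_3$, $\frs=\zsp$, $\gs=\frr=\so_3^*$, and $\frg^\perp=\zsp$, so all hypotheses hold; yet $\met|_{\so_3^*}=0$, whence $\frj=\so_3^*\not\subseteq\frg^\perp=\zsp$. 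The proposition is of course trivially satisfied in this example, which shows that your strategy aims at a strictly stronger (and untrue) conclusion.

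The paper's argument instead manufactures a smaller ideal that genuinely lies in $\frg^\perp$. Let $\frk_0$ be the image of $\frg^\perp$ under the projection $\frg\to\frk$ (well-defined by Theorem~\ref{thm:BGZ_kernel}); since the $\zen(\gs)$-component of $\frg^\perp$ acts trivially on $\gs$, one has $[\frg^\perp,\gs]=[\frk_0,\gs]$, and $\frg^\perp$-invariance gives $[[\frk,\frk_0],\gs]\subseteq\frg^\perp\cap\gs$. The subalgebra $\frq\subseteq\frk$ generated by $\frm=\frk_0+[\frk,\frk_0]$ is an \emph{ideal} of $\frk$ (this is where compactness of $\frk$ is used, via $\frk=\frz_\frk(\frk_0)+[\frk,\frk_0]$), and for any $\frq$-module $V$ one has $\frq\cdot V=\frm\cdot V$. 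With $V=\gs$ this yields $[\frq,\gs]\subseteq\frg^\perp\cap\zen(\gs)$. Being $\frk$-stable (since $\frq\trianglelefteq\frk$) and $\gs$-stable (since it sits in $\zen(\gs)$), the subspace $[\frq,\gs]$ is a $\frg$-ideal inside $\frg^\perp$ containing $[\frg^\perp,\gs]$, and must therefore vanish.
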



The proof is based on the following
immediate observations:

\begin{lem}\label{lem0}
Suppose $\met$ is $\frg^{\perp}$-invariant.
Then $ [ [\frk, \frg^{\perp}], \gs]  \subseteq \frg^{\perp} \cap \gs$. 
\end{lem}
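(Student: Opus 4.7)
The plan is to fix arbitrary $x \in \frk$, $\xi \in \frg^\perp$, $y \in \gs$, and show $[[x,\xi],y] \in \frg^\perp \cap \gs$. By Theorem \ref{thm:BGZ_kernel} I may write $\xi = \xi_k + \xi_z$ with $\xi_k \in \frk$ and $\xi_z \in \zen(\gs)$. Since $\gs$ is an ideal of $\frg$, the operator $\ad_\frg(x)$ restricts to a derivation of $\gs$, which preserves the characteristic ideal $\zen(\gs)$; hence $[x,\xi_z] \in \zen(\gs)$, so $[[x,\xi_z],y] = 0$ for every $y \in \gs$. This already gives
\[
[[x,\xi],y] \;=\; [[x,\xi_k],y] \;\in\; [\frk,\gs] \;\subseteq\; \gs,
\]
which yields the $\gs$-membership half of the claim.

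For the $\frg^\perp$ half, I would test $[[x,\xi_k],y]$ against an arbitrary $w \in \frg$. The $\gs$-invariance of $\met$ in Theorem \ref{thm:BGZ_invariance}(2) makes $\ad(y)$ skew for $\met$, so $\langle [[x,\xi_k],y],w\rangle = \langle [x,\xi_k],[y,w]\rangle$. Substituting $\xi_k = \xi - \xi_z$ splits this pairing into two contributions. The $[x,\xi_z]$-contribution vanishes by another application of $\ad(y)$-skew, because $[x,\xi_z] \in \zen(\gs)$ forces $[y,[x,\xi_z]] = 0$. For the $[x,\xi]$-contribution, the $\frg^\perp$-invariance hypothesis lets me use that $\ad(\xi)$ is skew, yielding $\langle [x,\xi],[y,w]\rangle = \langle x,[\xi,[y,w]]\rangle$. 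Since $[y,w] \in \gs$, Theorem \ref{thm:BGZ_kernel} places $[\xi,[y,w]]$ inside $\frg^\perp \cap \zen(\gs)$, so this pairing with $x$ vanishes automatically. Hence $[[x,\xi_k],y] \in \frg^\perp$, as needed.

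The main obstacle is conceptual rather than computational: $\xi_k$ by itself is not known to lie in $\frg^\perp$, so the skew identity for $\ad(\xi_k)$ is not directly at our disposal. The device $\xi_k = \xi - \xi_z$ trades this problematic component for $\xi \in \frg^\perp$, on which the hypothesized $\frg^\perp$-invariance acts, plus a central correction $\xi_z$ whose contribution is annihilated by any bracket with $\gs$. With this decomposition fixed, the rest is a routine interplay of $\ad(y)$-skew and $\ad(\xi)$-skew together with the inclusion $[\frg^\perp,\gs] \subseteq \frg^\perp \cap \zen(\gs)$ supplied by Theorem \ref{thm:BGZ_kernel}.
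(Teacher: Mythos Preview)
Your proof is correct. The paper records this lemma as an ``immediate observation'' and gives no argument, so there is nothing to compare against in detail; your decomposition $\xi=\xi_k+\xi_z$ via Theorem~\ref{thm:BGZ_kernel}, together with the skewness of $\ad(y)$ (Theorem~\ref{thm:BGZ_invariance}(2)) and of $\ad(\xi)$ (the hypothesis), is precisely the kind of direct verification the authors have in mind.
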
 

and

\begin{lem} \label{lemA}
Let $\frh$ be any Lie algebra and $V$ a module for $\frh$. Suppose that
the subalgebra $\frq$ of $\frh$ is generated by the subspace $\frm$  of $\frh$. 
Then $\frq \cdot V = \frm \cdot V$. 
\end{lem}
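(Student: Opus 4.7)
The inclusion $\frm\cdot V\subseteq\frq\cdot V$ is immediate because $\frm\subseteq\frq$, so the whole point is to establish the reverse inclusion $\frq\cdot V\subseteq\frm\cdot V$. I would do this by identifying the set of elements of $\frh$ that act on $V$ through $\frm\cdot V$ and showing it is already a Lie subalgebra.

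Concretely, set $W:=\frm\cdot V$ (the linear span of all $m\cdot v$ with $m\in\frm$, $v\in V$) and put
\[
\frh' \;=\; \{x\in\frh \mid x\cdot V\subseteq W\}.
\]
Then $\frh'$ is a linear subspace of $\frh$ containing $\frm$ by the very definition of $W$. The plan is to show that $\frh'$ is in fact a Lie subalgebra of $\frh$; once this is done, since $\frq$ is generated as a Lie subalgebra by $\frm\subseteq\frh'$, one concludes $\frq\subseteq\frh'$, and hence $\frq\cdot V\subseteq W=\frm\cdot V$.

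The key (and really the only) observation is the following: if $x\in\frh'$, then because $W\subseteq V$ we automatically have
\[
x\cdot W \;\subseteq\; x\cdot V \;\subseteq\; W.
\]
So any $x\in\frh'$ preserves the subspace $W$. Hence for $x,y\in\frh'$ and $v\in V$, the identity
\[
[x,y]\cdot v \;=\; x\cdot(y\cdot v) - y\cdot(x\cdot v)
\]
yields an element of $x\cdot W+y\cdot W\subseteq W$, showing $[x,y]\in\frh'$. Thus $\frh'$ is a Lie subalgebra, and the argument is complete. No real obstacle is foreseen; this is essentially a one-line induction on bracket depth, recast as a stability argument for the subspace $W$.
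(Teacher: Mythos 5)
Your proof is correct. The paper records this lemma as an ``immediate observation'' and supplies no written proof; your stability argument---showing that $\frh'=\{x\in\frh\mid x\cdot V\subseteq \frm\cdot V\}$ is a subalgebra containing $\frm$, hence containing $\frq$---is exactly the natural formalization of the intended induction on bracket depth, and every step checks out.
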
 

Together with 

\begin{lem} \label{lemB}
Let $\frk$ be semisimple of compact type and $\frk_{0}$ a subalgebra of $\frk$. 
Then the subalgebra $\frq$ generated $\frm = \frk_{0} + [\frk, \frk_{0}]$ is an ideal of $\frk$. 
\end{lem}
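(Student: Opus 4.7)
The plan is to exploit compactness of $\frk$ to split $\frk_0^\perp$ into $\ad(\frk_0)$-isotypic pieces, identify $\frm$ explicitly, and then reduce the desired ideal property via Lemma \ref{lemA} to a short Jacobi computation.

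First, fix an $\ad$-invariant positive definite inner product $B$ on $\frk$ (the negative Killing form, say) and write $\frk=\frk_0\oplus\frk_0^\perp$. Invariance makes $\frk_0^\perp$ an $\ad(\frk_0)$-submodule, and since any subalgebra of a compact Lie algebra is itself compact, hence reductive, one can split $\frk_0^\perp=V_0\oplus W$, where $V_0=(\frk_0^\perp)^{\frk_0}=\zen_\frk(\frk_0)\cap\frk_0^\perp$ and $W$ is the sum of the non-trivial irreducible $\ad(\frk_0)$-submodules. For each such irreducible summand $V_j\subseteq W$, the subspace $[\frk_0,V_j]$ is a non-zero $\ad(\frk_0)$-submodule of $V_j$, so equals $V_j$; thus $[\frk_0,W]=W$, and consequently $\frm=\frk_0+[\frk,\frk_0]=\frk_0\oplus W$.

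Next I would apply Lemma \ref{lemA} with $V=\frk$ viewed as a $\frk$-module via $\ad$: since $\frq$ is the subalgebra of $\frk$ generated by $\frm$, it yields $[\frq,\frk]=[\frm,\frk]$. So it suffices to prove $[\frm,\frk]\subseteq\frq$. Expanding $[\frk_0\oplus W,\, \frk_0\oplus V_0\oplus W]$, every summand except $[V_0,W]$ is obviously in $\frq$: we have $[\frk_0,\frk_0]\subseteq\frk_0$, $[\frk_0,V_0]=0$, $[\frk_0,W]\subseteq W$, and $[W,W]\subseteq\frq$ because $W\subseteq\frq$ and $\frq$ is a subalgebra.

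The remaining term is handled by a single application of the Jacobi identity. Since $W\subseteq[\frk_0^\perp,\frk_0]$, it is enough to evaluate $[v,[a,z]]$ for $v\in V_0$, $a\in\frk_0^\perp$, $z\in\frk_0$. As $[v,z]=0$, one gets
\[
[v,[a,z]] \;=\; [[v,a],z]+[a,[v,z]] \;=\; [[v,a],z] \;\in\; [\frk,\frk_0]\subseteq\frm\subseteq\frq.
\]
Hence $[V_0,W]\subseteq\frq$, which completes the inclusion $[\frm,\frk]\subseteq\frq$ and shows that $\frq$ is an ideal of $\frk$. The main obstacle is precisely this last step: elements of $V_0$ a priori lie outside $\frq$, so one might fear that $[V_0,W]$ also escapes $\frq$; the point on which the proof turns is that the centralizing property $[V_0,\frk_0]=0$ lets the Jacobi identity reabsorb the bracket into $[\frk,\frk_0]\subseteq\frm$.
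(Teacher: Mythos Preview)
Your proof is correct and follows essentially the same approach as the paper's: both reduce to showing $[\frk,\frm]\subseteq\frq$ and then pass to $\frq$ via the iterated-commutator argument (which you invoke as Lemma~\ref{lemA}, while the paper does it inline), and the decisive step in both is the Jacobi computation showing that bracketing the centralizer $\zen_\frk(\frk_0)$ against $[\frk,\frk_0]$ lands back in $[\frk,\frk_0]$. Your orthogonal decomposition $\frk=\frk_0\oplus V_0\oplus W$ is just a finer version of the paper's $\frk=\zen_\frk(\frk_0)+[\frk,\frk_0]$, and the arguments line up once one observes $\frm=\frk_0\oplus W$.
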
 
\begin{proof} 
Put $\frz= \frz_{\frk}(\frk_{0})$. 
Then $[\frz, \frm] \subseteq  \frm$ and  $[ [\frk, \frk_{0}], \frm] \subseteq  \frm +  [\frm, \frm]$. 
Since $\frk =  [\frk, \frk_{0}] + \frz$, this shows $[\frk, \frm] \subseteq \frq$.
Since $\frq$ is linearly spanned by the iterated commutators of elements of $\frm$,
$[\frk, \frq] \subseteq \frq$. 
\end{proof}

\begin{proof}[Proof of Proposition \ref{prop:gperp_inv}] 
Let $\frk_{0}$ be the image of $\frg^{\perp}$ under the projection  homomorphism $\frg \to \frk$. 
Note that by Theorem \ref{thm:BGZ_kernel} above,
$[\frg^{\perp}, \gs] = [\frk_{0}, \gs]$.
Let $\frq \subseteq \frk$ be the  subalgebra generated by
$\frm = \frk_{0} + [\frk, \frk_{0}]$ and consider $V = \gs$ as a module
for $\frq$.
Since $\frq$ is an ideal of  $\frk$, $[\frq, V]$ is a submodule for $\frk$, that is, $[\frk, [\frq, V]] \subseteq  [\frq, V]$.
By Lemmas \ref{lem0}, \ref{lemA} and Theorem \ref{thm:BGZ_kernel}
we have
$[\frq, V] = [\frm, V] \subseteq \frg^{\perp} \cap \zen(\gs)$.
Hence, $\frj =  [\frm, V] \subseteq \frg^{\perp}$ is an ideal in $\frg$, with $\frj \supseteq [\frg^{\perp}, \gs] = [\frk_{0}, \gs]$.
Since $\frg^\perp$ contains no non-trivial ideals of $\frg$ by
assumption, we conclude that $\frj=\zsp$.
\end{proof}

%
%
%
%


\section[Metric Lie algebras with abelian radical]{Metric Lie algebras with abelian radical}
\label{sec:abelianradical}

In this section we study finite-dimensional real Lie algebras
$\frg$ whose solvable radical $\frr$ is abelian and which are
equipped with a nil-invariant symmetric bilinear form $\met$.

\subsection{An algebraic theorem}
The Lie algebras with abelian radical and a nil-invariant
symmetric bilinear form decompose into three distinct types of
metric Lie algebras.

{
\renewcommand{\themthm}{\ref{mthm:abelian_rad1}}
\begin{mthm}
Let $\frg$ be a Lie algebra whose solvable radical $\frr$ is
abelian.
Suppose $\frg$ is equipped with a nil-invariant symmetric bilinear
form $\met$ such that the kernel $\frg^\perp$ of $\met$
does not contain a non-trivial ideal of $\frg$.
Let $\frk\times\frs$ be a Levi subalgebra of $\frg$, where $\frk$
is of compact type and $\frs$ has no simple factors of compact
type.
Then $\frg$ is an orthogonal direct product of ideals
\[
\frg = \frg_1 \times \frg_2 \times \frg_3,
\]
with
\[
\frg_1=\frk\ltimes\fra,
\quad
\frg_2=\frs_0,
\quad
\frg_3=\frs_1\ltimes\frs_1^*,
\]
where $\frr=\fra\times\frs_1^*$ and $\frs=\frs_0\times\frs_1$ are
orthogonal direct products,
and $\frg_3$ is a metric cotangent algebra.
The restrictions of $\met$ to $\frg_2$ and $\frg_3$ are invariant
and non-degenerate.
In particular, $\frg^\perp\subseteq\frg_1$.
\end{mthm}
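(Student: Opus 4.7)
The plan is to build the decomposition by separating $\frr$ and $\frs$ according to their mutual actions, and then verify orthogonality and non-degeneracy using Theorems \ref{thm:BGZ_invariance} and \ref{thm:BGZ_kernel} together with nil-invariance applied to $\ad(v)$ for $v\in\frr$, which is nilpotent because $\frr$ is abelian (so $\ad(v)^2=0$).

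First I write $\frr=\fra\oplus\frr'$ where $\fra:=\frr^{\frs}$ is the $\frs$-fixed part of the radical and $\frr':=[\frs,\frr]$ is the sum of its non-trivial $\frs$-isotypic components; both are $\frk$-invariant since $\frk$ centralizes $\frs$. Correspondingly I split the Levi as $\frs=\frs_0\times\frs_1$ where $\frs_0$ collects the simple factors of $\frs$ acting trivially on $\frr$, so that $\frs_1$ acts on $\frr'$ without non-zero fixed vectors. Nil-invariance under $\ad(v)$ for $v\in\frr$ gives $\langle[v,y],z\rangle=-\langle y,[v,z]\rangle$ for all $y,z\in\frg$; specializing $z\in\frr$ kills the right-hand side, so $\ad(\frs)v\perp\frr$, and since $\frr'=\ad(\frs_1)\frr'$ this forces $\frr'\perp\frr$. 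In particular $\frr'$ is totally isotropic and $\frr'\perp\fra$. The relation $\frk\perp[\frs,\frg]$ from (\ref{eq:orthogonal}) gives $\frk\perp\frs$ and $\frk\perp\frr'$, while $\gs$-invariance of $\met$ on $\frg$ (Theorem \ref{thm:BGZ_invariance}(2)) applied using $\frs_i=[\frs_i,\frs_i]$ shows $\frs_0\perp\fra,\frs_1,\frr'$ and $\fra\perp\frs_1,\frr'$. So $\frg=(\frk\oplus\fra)\oplus\frs_0\oplus(\frs_1\oplus\frr')$ is an orthogonal decomposition of $\frg$ as a vector space.

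To promote this into an orthogonal product of ideals, I must show that $\met$ restricts non-degenerately to $\frg_2:=\frs_0$ and to $\frg_3:=\frs_1\ltimes\frr'$, and then that $[\frk,\frr']=0$. The radical of $\met|_{\frg_i}$ ($i=2,3$) is orthogonal to all of $\frg$ by the above, hence contained in $\frg^\perp$; but Theorem \ref{thm:BGZ_kernel} places $\frg^\perp\subseteq\frk\ltimes\zen(\gs)=\frk\oplus\fra$, whose intersection with $\frg_2$ or $\frg_3$ is trivial, so the radicals vanish. The main obstacle is the claim $[\frk,\frr']=0$: for $v\in\frr'$ and $k\in\frk$, nil-invariance under $\ad(v)$ yields
\[
\langle[v,k],s\rangle=-\langle k,[v,s]\rangle=0 \quad\text{for all }s\in\frs_1,
\]
using $[v,s]\in\frr'$ and $\frk\perp\frr'$. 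Since $[v,k]\in\frr'$ is also orthogonal to $\frr'$ by isotropy, the non-degeneracy of $\met|_{\frg_3}$ just established forces $[v,k]=0$.

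With $[\frk,\frr']=0$ in hand, the subspaces $\frg_1:=\frk\ltimes\fra$, $\frg_2$ and $\frg_3$ are pairwise commuting ideals of $\frg$ whose sum is $\frg$, orthogonal by the above, and $\frg^\perp\subseteq\frg_1$ by Theorem \ref{thm:BGZ_kernel}. Finally, $\frg_3$ is a metric cotangent algebra: non-degeneracy of $\met|_{\frg_3}$ together with total isotropy of $\frr'$ identifies $\frr'$ with $\frs_1^*$ via the pairing induced by $\met$, and $\frs_1$-invariance of $\met$ forces this identification to intertwine the adjoint action of $\frs_1$ on $\frr'$ with the coadjoint action on $\frs_1^*$, yielding $\frg_3\cong\frs_1\ltimes\frs_1^*$.
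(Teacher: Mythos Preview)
Your proof is correct and follows the same route as the paper, using Theorems \ref{thm:BGZ_invariance}, \ref{thm:BGZ_kernel} and the relations \eqref{eq:orthogonal}; the only difference is that the paper obtains $[\frk,\frr']=0$ slightly earlier by observing directly that $\frc:=[\frs,\frr]\cap[\frk,\frr]$ is an ideal lying in $\frg^\perp$ (since $\frc\perp\frk,\frs$ by \eqref{eq:orthogonal} and $\frc\perp\frr$ by $\frr$-invariance), whereas you first establish non-degeneracy of $\met|_{\frg_3}$ and deduce $[\frk,\frr']=0$ from it. Your final identification $\frr'\cong\frs_1^*$ tacitly uses that $\frs_1\cap(\frr')^{\perp}=0$; this holds because the intersection is, by invariance, an ideal of $\frs_1$ whose action on $\frr'$ lands in the radical of $\met|_{\frg_3}$ and is therefore trivial, contradicting the definition of $\frs_1$ unless the ideal vanishes.
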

\addtocounter{mthm}{-1}
}

We split the proof into several lemmas.
Consider the submodules of invariants
$\frr^\frs, \frr^\frk\subseteq\frr$.
Since $\frs$, $\frk$ act reductively, we have
\[
[\frs,\frr]\oplus\frr^\frs=\frr=[\frk,\frr]\oplus\frr^\frk.
\]
Then $\fra=\frr^\frs$, $\frb=[\frs,\frr^\frk]$ and
$\frc=[\frs,\frr]\cap[\frk,\frr]$ are ideals in $\frg$ and
$\frr = \fra\oplus\frb\oplus\frc$.
Recall from Theorem \ref{thm:BGZ_invariance} that $\met$ is
in particular $\frs$- and $\frr$-invariant.

\begin{lem}\label{lem:R=AxB}
$\frc=\zsp$ and $\frr$ is an orthogonal direct sum of ideals in $\frg$
\[
\frr = \fra\oplus\frb
\]
where $[\frk,\frr]\subseteq\fra$ and $[\frs,\frr]=\frb$.
\end{lem}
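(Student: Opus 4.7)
The plan is to reduce the lemma to proving that the ``mixed'' piece $\frc$ vanishes; the decomposition and orthogonality then drop out of the complete reducibility of the $\frs$-action on $\frr$. Both $[\frk,\frr]$ and $[\frs,\frr]$ are ideals in $\frg$ (each is obviously stable under its own Levi factor, stable under the other by the Jacobi identity and $[\frk,\frs]=0$, and trivially $\frr$-stable since $\frr$ is abelian), so their intersection $\frc$ is also an ideal. Hence, since by hypothesis $\frg^{\perp}$ contains no non-trivial ideal of $\frg$, it suffices to show $\frc \subseteq \frg^{\perp}$.

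From \eqref{eq:orthogonal} we already have $\frc \perp \frk$ (via $\frc \subseteq [\frs,\frg]$) and $\frc \perp \frs$ (via $\frc \subseteq [\frk,\frg]$), so the problem reduces to showing $\frc \perp \frr$. Here the key is to feed nil-invariance with the ad-nilpotent directions coming from the radical itself: for any $v \in \frr$, the operator $\ad(v)$ maps $\frg$ into $\frr$ and then annihilates $\frr$, so $\ad(v)^2 = 0$ and $\ad(v)$ is a nilpotent element of $\mathrm{Lie}(\ac{\Inn(\frg)})$. Applying the invariance relation $\langle \ad(v)x, y\rangle + \langle x, \ad(v)y\rangle = 0$ with $x \in \frk$ and $y \in \frr$, the second term drops out because $[\frr,\frr] = 0$, leaving $\langle [\frr,\frk], \frr\rangle = 0$. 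Since $\frc \subseteq [\frk,\frr]$, this yields $\frc \perp \frr$ and finishes the proof that $\frc \subseteq \frg^{\perp}$, hence $\frc = \zsp$.

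Once $\frc = \zsp$ is available, the $\frs$-reductive splitting $\frr = \frr^{\frs} \oplus [\frs,\frr]$ combined with $\frr = \fra \oplus \frb \oplus \frc$ immediately gives $\fra = \frr^{\frs}$ and $\frb = [\frs,\frr]$. Orthogonality $\fra \perp \frb$ follows from the $\frs$-invariance of $\met$ on $\gs$ supplied by Theorem \ref{thm:BGZ_invariance}(2), and the inclusion $[\frk,\frr] \subseteq \fra$ is obtained from $[\frs,[\frk,\frr]] = [\frk,[\frs,\frr]] = [\frk,\frb] = 0$, where the last equality uses $\frb \subseteq \frr^{\frk}$. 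Both $\fra$ and $\frb$ are $\frg$-ideals by construction.

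The real obstacle, once this strategy is fixed, is spotting the orthogonality $\frc \perp \frr$: the invariance statements of Theorems \ref{thm:BGZ_invariance} and \ref{thm:BGZ_kernel} by themselves do not bridge the gap from $\frc \perp (\frk+\frs)$ to $\frc \subseteq \frg^{\perp}$. The nilpotent derivations $\ad(v)$, $v \in \frr$ — available for free precisely because the radical is abelian — are the ingredient that injects the missing orthogonality into the argument via nil-invariance.
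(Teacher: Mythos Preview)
Your argument is correct and follows essentially the same route as the paper: show $\frc\perp(\frk+\frs)$ via \eqref{eq:orthogonal}, show $\frc\perp\frr$ using that $\ad(\frr)$ is skew and $\frr$ is abelian, conclude $\frc\subseteq\frg^{\perp}$ is an ideal hence zero, and read off the rest from the reductive splittings. The one quibble is your closing commentary: Theorem~\ref{thm:BGZ_invariance} already asserts that $\met$ is $\gs$-invariant, in particular $\frr$-invariant, so the orthogonality $\frc\perp\frr$ is available directly from that theorem without re-deriving nilpotency of $\ad(v)$ by hand --- the paper simply quotes ``$\frr$-invariance'' at that step.
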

\begin{proof}
The $\frs$-invariance of $\met$ immediately implies $\fra\perp\frb$.
Since $\frr$ is abelian, $\frr$-invariance implies $\frc\perp\frr$.
Since $\frc\perp(\frs\times\frk)$ by \eqref{eq:orthogonal},
this shows $\frc$ is an ideal contained in
$\frg^\perp$, hence $\frc=\zsp$.
Now $[\frk,\frr]\subseteq\fra$ and $[\frs,\frr]=\frb$ by
definition of $\fra$ and $\frb$.
\end{proof}

\begin{lem}\label{lem:G=KAxSB}
$\frg$ is a direct product of ideals
\[
\frg = (\frk\ltimes\fra)\times(\frs\ltimes\frb),
\]
where $(\frk\ltimes\fra)\perp(\frs\ltimes\frb)$.
\end{lem}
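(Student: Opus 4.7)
The plan is to establish the splitting in two stages: first as a direct product of Lie algebras, and then verify that the two factors are orthogonal with respect to $\met$.

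For the Lie algebra splitting, I would check the relevant bracket relations between the two putative factors. Lemma \ref{lem:R=AxB} already gives $\frr=\fra\oplus\frb$, and together with the Levi decomposition this yields $\frg=\frk\oplus\frs\oplus\fra\oplus\frb$ as a vector space, so it remains only to show that $\frk\ltimes\fra$ and $\frs\ltimes\frb$ commute elementwise. Three of the four cross-brackets are immediate: $[\frk,\frs]=0$ because $\frk\times\frs$ is a direct product of Levi factors; $[\fra,\frb]=0$ because $\frr$ is abelian; and $[\frs,\fra]=0$ directly from the definition $\fra=\frr^\frs$. The only relation requiring work is $[\frk,\frb]=0$. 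Since $\frb=[\frs,\frr^\frk]$, for $k\in\frk$, $s\in\frs$, $r\in\frr^\frk$ the Jacobi identity gives
\[
[k,[s,r]] = [[k,s],r] + [s,[k,r]] = 0,
\]
using $[\frk,\frs]=0$ and $[\frk,\frr^\frk]=0$. Hence the two subalgebras centralize each other, and $\frg$ is their direct product.

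For orthogonality I would verify the four pairings between $\frk$, $\fra$, $\frs$, $\frb$. The relation $\fra\perp\frb$ was already shown in Lemma \ref{lem:R=AxB}. The orthogonality relations \eqref{eq:orthogonal} (which come from Theorem \ref{thm:BGZ_invariance}) yield $\frk\perp\frs$, because $\frk=[\frk,\frk]\subseteq[\frk,\frg]$, and $\frk\perp\frb$, because $\frb\subseteq[\frs,\frg]$. The last pairing $\frs\perp\fra$ is not immediate from \eqref{eq:orthogonal}, but follows from the $\frs$-invariance of $\met$ combined with $\frs=[\frs,\frs]$: for $s_1,s_2\in\frs$ and $a\in\fra$,
\[
\langle[s_1,s_2],a\rangle = -\langle s_2,[s_1,a]\rangle = 0,
\]
since $[\frs,\fra]=0$.

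I do not expect any substantive obstacle; the lemma amounts to careful bookkeeping of the bracket and invariance relations already at hand, and its role is mainly to isolate the ``mixed'' factor $\frk\ltimes\fra$ from the factor $\frs\ltimes\frb$ on which the $\frs$-action will be exploited in subsequent steps toward the proof of Theorem \ref{mthm:abelian_rad1}.
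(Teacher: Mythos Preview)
Your proof is correct and follows essentially the same route as the paper's: the paper also obtains the direct-product splitting from Lemma~\ref{lem:R=AxB} (the key point being $[\frk,\frb]=0$, which the paper deduces from $[\frk,\frr]\subseteq\fra$ and the fact that $\frb$ is an ideal, while you use the Jacobi identity directly) and derives the orthogonality from \eqref{eq:orthogonal} together with the $\frs$-invariance of $\met$ for $\frs\perp\fra$ and $\frk\perp\frb$. Your write-up is simply a more detailed version of the paper's two-line argument.
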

\begin{proof}
The splitting as a direct product of ideals follows from Lemma \ref{lem:R=AxB}.
The orthogonality follows together with \eqref{eq:orthogonal} and
the fact that the $\frs$-invariance of $\met$ implies $\frs\perp\fra$
and $\frk\perp\frb$.
\end{proof}

\begin{lem}\label{lem:SB}
$\frg^\perp\subseteq \frk\ltimes\fra$
and
$\frs\ltimes\frb$ is a non-degenerate ideal of $\frg$.
\end{lem}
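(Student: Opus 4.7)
The plan is to apply Theorem \ref{thm:BGZ_kernel} to the decomposition obtained in Lemma \ref{lem:G=KAxSB}, after identifying the center $\zen(\gs)$ explicitly.

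First I would compute $\zen(\gs)$ where $\gs = \frs \ltimes \frr$. Since $\frs$ is semisimple, $\zen(\frs) = \zsp$, so the $\frs$-component of any central element must vanish. Since $\frr$ is abelian, the remaining condition for an element $r \in \frr$ to be central in $\gs$ is that $[y, r] = 0$ for all $y \in \frs$, i.e.\ $r \in \frr^\frs = \fra$. Thus $\zen(\gs) = \fra$.

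Next, Theorem \ref{thm:BGZ_kernel} gives directly
\[
\frg^\perp \subseteq \frk \ltimes \zen(\gs) = \frk \ltimes \fra,
\]
which is the first assertion.

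Finally, Lemma \ref{lem:G=KAxSB} exhibits $\frg$ as the orthogonal direct product of ideals $\frk \ltimes \fra$ and $\frs \ltimes \frb$. Consequently the kernel of the restriction $\met|_{\frs \ltimes \frb}$ equals $(\frs \ltimes \frb) \cap \frg^\perp$, and by the first part this intersection is contained in $(\frs \ltimes \frb) \cap (\frk \ltimes \fra) = \zsp$. Therefore $\met|_{\frs \ltimes \frb}$ is non-degenerate, proving that $\frs \ltimes \frb$ is a non-degenerate ideal. The only substantive step is the identification of $\zen(\gs)$, which is straightforward given that $\frr$ is abelian; everything else is a direct application of prior results.
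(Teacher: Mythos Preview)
Your proof is correct and follows essentially the same approach as the paper: identify $\zen(\gs)=\fra$, apply Theorem~\ref{thm:BGZ_kernel} to get $\frg^\perp\subseteq\frk\ltimes\fra$, and then use the orthogonal direct product from Lemma~\ref{lem:G=KAxSB} to conclude that the kernel of the restriction to $\frs\ltimes\frb$ lies in $(\frs\ltimes\frb)\cap(\frk\ltimes\fra)=\zsp$. The only difference is that you spell out the computation of $\zen(\gs)$, which the paper simply states.
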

\begin{proof}
$\zen(\gs)=\fra$, therefore $\frg^\perp\subseteq\frk\ltimes\fra$
by Theorem \ref{thm:BGZ_kernel}.
Since also $(\frs\ltimes\frb)\perp(\frk\ltimes\fra)$, we have
$(\frs\ltimes\frb)\cap(\frs\ltimes\frb)^\perp\subseteq\frg^\perp\subseteq\frk\ltimes\fra$. It follows that
$(\frs\ltimes\frb)\cap(\frs\ltimes\frb)^\perp=\zsp$.
\end{proof}


To complete the proof of Theorem \ref{mthm:abelian_rad1}, it remains
to understand the structure of the ideal $\frs\ltimes\frb$, which by
Theorem \ref{thm:BGZ_invariance} and the preceding lemmas is a
Lie algebra with an invariant non-degenerate scalar product given by
the restriction of $\met$.

\begin{lem}\label{lem:trivial}
$\frb$ is totally isotropic.
Let $\frs_0$ be the kernel of the $\frs$-action on $\frb$.
Then $\frs_0=\frb^\perp\cap\frs$.
\end{lem}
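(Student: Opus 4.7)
The proof splits into two parts. For the isotropy of $\frb$, the key input is the $\frr$-invariance of $\met$ supplied by Theorem \ref{thm:BGZ_invariance}(2) together with the fact that $\frr$ is abelian. For any $b_1, b_2 \in \frb$ and $x \in \frs$, skew-symmetry of $\ad(b_1)$ yields
\[
\langle [b_1, x], b_2 \rangle + \langle x, [b_1, b_2] \rangle = 0,
\]
and the second term vanishes because $[\frr,\frr]=\zsp$. Hence $\langle [\frs, \frb], \frb \rangle = 0$. Since $\frr=\fra\oplus\frb$ with $\fra=\frr^\frs$, the $\frs$-invariants $\frb^\frs=\frb\cap\fra$ are trivial, so by semisimplicity of $\frs$ we have $\frb=[\frs,\frb]$, and isotropy follows.

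For the identity $\frs_0=\frb^\perp\cap\frs$, I would work inside the non-degenerate ideal $\frh=\frs\ltimes\frb$ furnished by Lemma \ref{lem:SB}. The inclusion $\frs_0\subseteq\frb^\perp$ follows from invariance: decomposing $\frs=\frs_0\times\frs_1$ into commuting ideals one has $\frb=[\frs_1,\frb]$, and for $x\in\frs_0$ and $b=[y,b']\in\frb$ with $y\in\frs_1$ one computes $\langle x, [y,b']\rangle=-\langle [y,x],b'\rangle=0$ since $[y,x]=\zsp$. For the converse I would introduce the $\frs$-equivariant map $\alpha\colon\frs\to\frb^*$, $\alpha(x)(b)=\langle x,b\rangle$, whose kernel is precisely $\frb^\perp\cap\frs$. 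Because $\frb$ is totally isotropic, $\frb^\perp=(\frb^\perp\cap\frs)\oplus\frb$ inside $\frh$, and the non-degeneracy of $\met|_\frh$ forces $\dim\frb^\perp=\dim\frh-\dim\frb=\dim\frs$, so $\alpha$ is surjective and $\dim(\ker\alpha)=\dim\frs-\dim\frb$.

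Since $\ker\alpha$ is an ideal of $\frs$ containing $\frs_0$, by semisimplicity we may write $\ker\alpha=\frs_0\times\fri$ with $\fri\subseteq\frs_1$ an ideal. The induced $\frs$-module isomorphism $\frs/\ker\alpha\cong\frb^*$ shows that $\fri$ acts trivially on $\frb^*$ (an ideal acts trivially on any quotient containing it), hence by duality also on $\frb$. But $\frs_1$ acts faithfully on $\frb$ by construction, so $\fri=\zsp$ and $\ker\alpha=\frs_0$. The main obstacle is precisely this converse inclusion, where one has to promote the orthogonality $x\perp\frb$ to the stronger vanishing $[x,\frb]=\zsp$; this requires combining non-degeneracy of $\met|_\frh$ (to obtain surjectivity of $\alpha$) with semisimplicity of $\frs$ and the faithfulness of the $\frs_1$-action.
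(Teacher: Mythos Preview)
Your proof is correct, and the argument for isotropy of $\frb$ is essentially the paper's. For the second claim, however, the paper takes a much shorter route. From $\frr$-invariance one has the single identity
\[
\langle [x,b], s\rangle \;=\; -\langle x,[s,b]\rangle
\qquad(x,s\in\frs,\ b\in\frb),
\]
and this yields both inclusions at once: if $x\in\frs_0$ the left side is zero, so $x\perp[\frs,\frb]=\frb$; if $x\in\frb^\perp\cap\frs$ the right side is zero, so $[x,\frb]\subseteq\frs^\perp\cap\frb$, and the paper simply observes that $\frs^\perp\cap\frb=\zsp$ (immediate from the isotropy of $\frb$ together with the non-degeneracy of $\frs\ltimes\frb$ from Lemma~\ref{lem:SB}), whence $[x,\frb]=\zsp$.

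Your detour through the $\frs$-equivariant pairing map $\alpha\colon\frs\to\frb^*$, the dimension count $\dim\ker\alpha=\dim\frs-\dim\frb$, and the ideal-theoretic argument in $\frs$ is valid but considerably heavier machinery for the converse inclusion. What it buys you is the identity $\dim\frs_0=\dim\frs-\dim\frb$ as an immediate byproduct, something the paper only extracts afterwards in Lemma~\ref{lem:coadjoint}; what the paper's approach buys is a two-line proof that avoids the decomposition $\frs=\frs_0\times\frs_1$, surjectivity of $\alpha$, and the duality/faithfulness step entirely.
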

\begin{proof}
Since $\met$ is $\frr$-invariant and $\frr$ is abelian,
$\frb$ is totally isotropic.
For the second claim, use $\frb\cap\frs^\perp=\zsp$ and the invariance
of $\met$.
\end{proof}

\begin{lem}\label{lem:coadjoint}
$\frs$ is an orthogonal direct product of ideals
$\frs=\frs_0\times\frs_1$
with the following properties:
\begin{enumerate}
\item
$\frs_1\ltimes\frb$ is a metric cotangent algebra.
\item
$[\frs_0,\frb]=\zsp$ and $\frs_0=\frb^\perp\cap\frs$.
\end{enumerate}
\end{lem}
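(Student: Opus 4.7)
The plan is to produce $\frs_1$ as a complementary ideal to the kernel $\frs_0=\ker(\frs\to\gl(\frb))$ of the $\frs$-action on $\frb$. Since $\frs$ is semisimple and $\frs_0$ is an ideal, complete reducibility yields a complementary ideal $\frs_1$, so $\frs=\frs_0\times\frs_1$ as a direct product of ideals. Lemma \ref{lem:trivial} already identifies $\frs_0=\frb^\perp\cap\frs$ and gives $[\frs_0,\frb]=\zsp$, which settles part (2) outright.

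For the orthogonality $\frs_0\perp\frs_1$, I would exploit $\frs_1=[\frs_1,\frs_1]$ together with the $\frs$-invariance of $\met$ from Theorem \ref{thm:BGZ_invariance}. For $s_0\in\frs_0$ and $a,b\in\frs_1$,
\[
\langle s_0,[a,b]\rangle=-\langle [s_0,a],b\rangle=0,
\]
since $[s_0,\frs_1]=\zsp$. Combined with $\frs_0\perp\frb$ from Lemma \ref{lem:trivial}, this gives $\frs_0\perp(\frs_1\ltimes\frb)$, so $\frs\ltimes\frb=\frs_0\oplus(\frs_1\ltimes\frb)$ is an orthogonal decomposition. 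Since $\met$ is non-degenerate on $\frs\ltimes\frb$ by Lemma \ref{lem:SB}, its restriction to each summand is non-degenerate.

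To recognise $\frs_1\ltimes\frb$ as a metric cotangent algebra I would study the map $\Phi\colon\frb\to\frs_1^*$ defined by $\Phi(b)=\langle b,\cdot\rangle|_{\frs_1}$. Equivariance with respect to the coadjoint action on $\frs_1^*$ is an immediate translation of the invariance identity $\langle [s,b],t\rangle=-\langle b,[s,t]\rangle$. Injectivity holds because any $b\in\ker\Phi$ is orthogonal to $\frs_1$ by construction and to $\frb$ by total isotropy, so it lies in the kernel of $\met|_{\frs_1\ltimes\frb}$ and vanishes; surjectivity follows dually, for if $s\in\frs_1$ pairs trivially with $\frb$ then $s\in\frb^\perp\cap\frs=\frs_0$ by Lemma \ref{lem:trivial}, whence $s\in\frs_0\cap\frs_1=\zsp$. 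The resulting $\frs_1$-module isomorphism $\frb\cong\frs_1^*$ induces a Lie algebra isomorphism $\frs_1\ltimes\frb\cong\frs_1\ltimes\frs_1^*$, and $\met$ supplies the required non-degenerate invariant form with $\frs_1^*=\frb$ totally isotropic.

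The subtle point is the dual role played by $\frs_0$: it is at once the kernel of the $\frs$-representation on $\frb$ and the radical of the pairing $\frs\times\frb\to\RR$. That these coincide is precisely the content of Lemma \ref{lem:trivial}, and it is this coincidence that forces $\frs_1$ and $\frb$ to pair non-degenerately and makes the cotangent structure visible. I expect no other serious obstacle, as the remaining steps are routine consequences of semisimplicity and of the invariance properties already provided by Theorem \ref{thm:BGZ_invariance}.
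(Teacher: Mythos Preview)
Your proposal is correct and follows essentially the same approach as the paper: define $\frs_0$ as the kernel of the $\frs$-action on $\frb$, take $\frs_1$ as its complementary ideal, obtain orthogonality from invariance, and then identify $\frb$ with $\frs_1^*$ via the pairing induced by $\met$. The only cosmetic difference is that the paper establishes $\dim\frs_1=\dim\frb$ by a direct dimension count (non-degeneracy of $\met|_{\frs_1\ltimes\frb}$ together with total isotropy of $\frb$), whereas you argue injectivity and surjectivity of $\Phi$ separately; both lead to the same dual pairing and the same verification that the induced action is coadjoint.
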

\begin{proof}
The kernel $\frs_0$ of the $\frs$-action on $\frb$ is an ideal in
$\frs$, and by Lemma \ref{lem:trivial}
orthogonal to $\frb$.
Let $\frs_1$ be the ideal in $\frs$ such that $\frs=\frs_0\times\frs_1$.
Then $\frs_0\perp\frs_1$ by invariance of $\met$.

$\frs_1$ acts faithfully on $\frb$ and so $\frs_1\cap\frb^\perp=\zsp$
by Lemma \ref{lem:trivial}.
Moreover, $\frs_1\ltimes\frb$ is non-degenerate since $\frs\ltimes\frb$
is. But $\frb$ is totally isotropic by Lemma \ref{lem:trivial}, so
non-degeneracy implies $\dim\frs_1=\dim\frb$.
Therefore $\frb$ and $\frs_1$ are dually paired by $\met$.

Now identify $\frb$ with $\frs_1^*$ and write $\xi(s)=\langle \xi,s\rangle$
for $\xi\in\frs_1^*$, $s\in\frs_1$.
Then, once more by invariance of $\met$,
\[
[s,\xi](s') = \langle [s,\xi],s'\rangle = \langle \xi,-[s,s']\rangle
= \xi(-\ad(s)s') = (\ad^*(s)\xi)(s')
\]
for all $s,s'\in\frs_1$. So the action of $\frs_1$ on $\frs_1^*$
is the coadjoint action.
This means $\frs\ltimes\frb$ is a metric cotangent algebra
(cf.~Subsection \ref{subsec:cotangent}).
\end{proof}

\begin{proof}[Proof of Theorem \ref{mthm:abelian_rad1}]
The decomposition into the desired orthogonal ideals follows from
Lemmas \ref{lem:G=KAxSB} to \ref{lem:coadjoint}.
The structure of the ideals $\frg_2$ and $\frg_3$ is Lemma
\ref{lem:coadjoint}.
\end{proof}

The algebra $\frg_1$ in Theorem \ref{mthm:abelian_rad1} is of
Euclidean type.
Let $\frg=\frk\ltimes V$, with $V\cong\RR^n$, be an algebra of
Euclidean type and let $\frk_0$ be the kernel of the $\frk$-action
on $V$.
Proposition \ref{prop:gperp_inv} and the fact that the solvable
radical $V$ is abelian immediately give the following:

\begin{prop}\label{prop:Hcompact}
Let $\frg=\frk\ltimes V$ be a Lie algebra of Euclidean type,
and suppose $\frg$ is equipped with a symmetric 
bilinear form that is nil-invariant and $\frg^\perp$-invariant,
such that $\frg^\perp$ does not contain a non-trivial ideal of
$\frg$.
Then
\begin{equation}
\frg^\perp \subseteq \frk_0\times V.
\label{eq:GperpK0V0V1}
\end{equation}
\end{prop}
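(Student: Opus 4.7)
The plan is to invoke Proposition \ref{prop:gperp_inv} directly, after observing that in the Euclidean-type setting its ingredients degenerate considerably.

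First I would note that for $\frg=\frk\ltimes V$ of Euclidean type, the Levi factor $\frs$ (the part with no simple factors of compact type) is zero, and the solvable radical is $\frr=V$, which is abelian. Consequently $\gs=\frs\ltimes\frr=V$. All hypotheses of Proposition \ref{prop:gperp_inv} hold by the assumptions of the present proposition, so it applies and yields
\[
[\frg^{\perp},V] \;=\; [\frg^{\perp},\gs] \;=\; \zsp.
\]

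Next I would exploit this vanishing bracket. Write an arbitrary element of $\frg^{\perp}$ as $x+v$ with $x\in\frk$ and $v\in V$. Since $V$ is abelian, $[x+v,V]=[x,V]$, so the relation above forces $[x,V]=\zsp$, i.e.\ $x$ lies in the kernel $\frk_0$ of the $\frk$-action on $V$. Therefore the image of $\frg^{\perp}$ under the projection $\pi_\frk:\frg\to\frk$ is contained in $\frk_0$, and hence
\[
\frg^{\perp}\;\subseteq\; \frk_0+V \;=\; \frk_0\times V,
\]
where the sum is a Lie algebra direct product of ideals because $[\frk_0,V]=\zsp$ by definition of $\frk_0$.

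No step presents a genuine obstacle: the substantive work has already been done in Proposition \ref{prop:gperp_inv}, and the only thing to verify is that the conclusion $[\frg^\perp,\gs]=\zsp$ translates, under the very strong structural assumption that $\frr=V$ is abelian and $\frs=\zsp$, into a constraint on the $\frk$-component of $\frg^{\perp}$ alone.
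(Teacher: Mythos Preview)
Your argument is correct and follows exactly the paper's reasoning: the paper states that the proposition follows immediately from Proposition~\ref{prop:gperp_inv} together with the fact that the solvable radical $V$ is abelian, and you have simply written out those immediate details. There is nothing to add.
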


The following Examples \ref{ex:not_easy} and \ref{ex:not_easy2}
show that in general a metric Lie algebra of Euclidean type cannot
be further decomposed into orthogonal direct sums of metric Lie
algebras.
Both examples will play a role in Section \ref{sec:simplyconn}.

\begin{example}\label{ex:not_easy}
Let $\frk_1=\so_3$, $V_1=\RR^3$, $V_0=\RR^3$
and
$\frg=(\so_3\ltimes V_1)\times V_0$ with the natural action of
$\so_3$ on $V_1$.
Let $\phi:V_1\to V_0$ be an isomorphism and
put
\[
\frh = \{(0,v,\phi(v))\mid v\in V_0\}\subset(\frk_0\ltimes V_1) \times V_0.
\]
We can define a nil-invariant symmetric bilinear form on $\frg$
by identifying $V_1\cong\so_3^*$ and requiring
for $k\in\frk_1$, $v_0\in V_0$, $v_1\in V_1$,
\[
\langle k,v_0+v_1\rangle
=
v_1(k)-\phi^{-1}(v_0)(k),
\]
and further $\frk_1\perp\frk_1$,
$(V_0\oplus V_1)\perp(V_0\oplus V_1)$.
Then $\met$ has signature $(3,3,3)$
and kernel $\frh=\frg^\perp$, which is not an ideal in $\frg$.
Note that $\met$ is not invariant.
Moreover, $\frk_1\ltimes V_1$ is not orthogonal to $V_0$.
A direct factor $\frk_0$ can be added to this example at liberty.
\end{example}

\begin{example}\label{ex:not_easy2}
We can modify the Lie algebra $\frg$ from Example \ref{ex:not_easy} 
by embedding
the direct summand $V_0\cong\RR^3$ in a torus subalgebra in a semisimple Lie
algebra $\frk_0$ of compact type, say $\frk_0=\so_6$, to obtain a
Lie algebra $\frf=(\frk_1\ltimes V_1)\times\frk_0$.
We obtain a nil-invariant symmetric bilinear form of signature
$(15,3,3)$ on $\frf$ by extending $\met$ by a definite form on a
vector space complement of $V_0$ in $\frk_0$. The kernel of the
new form is still $\frg^\perp=\frh$.
\end{example}

\subsection{Nil-invariant bilinear forms on Euclidean algebras}

A \emph{Euclidean algebra} is a Lie algebra
$\euc_n=\so_n\ltimes\RR^n$,
where $\so_n$ acts on $\RR^n$ by the natural action.

By a \emph{skew pairing} of a Lie algebra $\frl$ and an $\frl$-module
$V$ we mean a bilinear map $\met:\frl\times V \to \RR$ such that
$\langle x,  y v \rangle = -\langle y, x v\rangle$
for all $x,y\in\frl$, $v\in V$.
Note that any nil-invariant symmetric bilinear form on
$\frg=\frk\ltimes\RR^n$ yields a skew pairing of $\frk$ and $\RR^n$.

\begin{prop}[\mbox{\cite[Proposition A.5]{BGZ}}]\label{prop:so3_skew_module}
Let $\met: \so_3 \times V \to \RR$ be a skew pairing for the (non-trivial) irreducible module $V$.
If the skew pairing is non-zero, then $V$ is isomorphic to the adjoint representation
of  $\so_3$ and $\met$ is proportional to the Killing form. 
\end{prop}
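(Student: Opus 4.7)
The plan is to exploit that $\so_3$ is three-dimensional and of compact type, using the self-dualities $\so_3^*\cong\so_3$ (via the Killing form $B$) and $\wedge^2\so_3\cong\so_3$ (via the bracket) to encode the skew pairing as a linear map into $\so_3$ itself, and then invoke Schur's lemma. The pairing $\met$ is equivalent to a linear map $\phi\colon V\to\so_3$ defined by $\langle x,v\rangle=B(x,\phi(v))$. For each fixed $v$, the bilinear form $(x,y)\mapsto\langle x,yv\rangle$ on $\so_3$ is antisymmetric by the skew-pairing identity, and identifying antisymmetric forms on $\so_3$ with $\so_3$ itself via the bracket and $B$ yields a second linear map $\mu\colon V\to\so_3$ satisfying
\[
\langle x,yv\rangle = B([x,y],\mu(v)).
\]
Setting $x=a$ in this formula and applying the skew-pairing identity once, together with invariance of $B$, immediately gives the relation $\phi(av)=[a,\mu(v)]$.

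The main step is to show that $\mu$ is $\so_3$-equivariant. I would compute $2\langle x,y(av)\rangle$ by substituting $y(av)=a(yv)-[a,y]v$ and iterating the skew-pairing identity together with the module identity $w(w'v)=[w,w']v+w'(wv)$ four times; re-expressing each resulting term via the defining formula for $\mu$ and applying the Jacobi identity in $\so_3$ yields $B([x,y],\mu(av))=B([x,y],[a,\mu(v)])$ for all $x,y\in\so_3$. Since $[\so_3,\so_3]=\so_3$, this forces $\mu(av)=[a,\mu(v)]$, so $\mu$ is a homomorphism of $\so_3$-modules with $\so_3$ carrying the adjoint representation. This iterated calculation is the main technical obstacle.

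Schur's lemma then finishes the proof. Since $V$ is non-trivial and irreducible and $\so_3$ is the adjoint module, equivariance of $\mu$ forces either $\mu=0$ or $V\cong\so_3$ with $\mu=c\cdot\id$. If $\mu=0$, the relation $\phi(av)=[a,\mu(v)]$ yields $\phi(av)=0$ for all $a\in\so_3$ and $v\in V$; non-triviality of $V$ gives $\so_3\cdot V=V$, whence $\phi=0$ and $\met=0$, contradicting the hypothesis. Hence $V\cong\so_3$ and $\mu=c\cdot\id$ with $c\ne 0$; the relation $\phi(av)=[a,\mu(v)]$ then gives $\phi(av)=c[a,v]$, and since the adjoint action coincides with the bracket and $\so_3$ is perfect, this forces $\phi=c\cdot\id$, so $\met=cB$ is a non-zero multiple of the Killing form.
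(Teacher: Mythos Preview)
Your argument is correct. The key computation — cycling the skew-pairing identity and the module identity to obtain
\[
2\langle x,y(av)\rangle
= \langle y,[a,x]v\rangle - \langle a,[x,y]v\rangle - \langle x,[a,y]v\rangle
= -2B\bigl([a,[x,y]],\mu(v)\bigr)
= 2B\bigl([x,y],[a,\mu(v)]\bigr),
\]
and hence $\mu(av)=[a,\mu(v)]$ — goes through exactly as you outline (three applications of each identity suffice, not four, but this is cosmetic). The subsequent appeal to Schur's lemma is clean; note that it uses the absolute irreducibility of the adjoint module of $\so_3$ to conclude $\mu=c\cdot\id$ rather than merely that $\mu$ is an isomorphism, which you implicitly assume and which is indeed true.

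As for comparison: the paper does not prove this proposition but quotes it from \cite[Proposition A.5]{BGZ}, so there is no in-text argument to set yours against. Your approach — packaging the pairing as a pair of linear maps $\phi,\mu\colon V\to\so_3$ via the accidental isomorphisms $\so_3^*\cong\so_3$ and $\wedge^2\so_3\cong\so_3$, and then forcing equivariance of $\mu$ — is a natural and self-contained route that makes transparent exactly where the three-dimensionality of $\so_3$ enters.
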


\begin{example}\label{ex:so3R3}
Consider $\frg=\so_3\ltimes \RR^n$ with a nil-invariant symmetric bilinear
form $\met$, and assume that the action of $\so_3$ is irreducible.
By Proposition \ref{prop:so3_skew_module}, either
$\so_3\perp\RR^n$, 
or $n=3$ and $\so_3$ acts by its coadjoint representation on
$\RR^3\cong\so_3^*$, and $\met$ is the dual pairing.
In the first case, $\RR^n$ is an ideal in $\frg^\perp$, and in the
second case, $\met$ is invariant and non-degenerate.
\end{example}

\begin{example}\label{ex:so4R4}
Let $\frg$ be the Euclidean Lie algebra $\so_4\ltimes \RR^4$ with a
nil-invariant symmetric bilinear form $\met$.
Since $\so_4\cong\so_3\times\so_3$, and here both factors
$\so_3$ act irreducibly on $\RR^4$, it follows from
Example \ref{ex:so3R3} that in $\frg$, $\RR^4$ is orthogonal to both
factors $\so_3$, hence to all of $\so_4$.
In particular, $\RR^4$ is an ideal contained in $\frg^\perp$.
\end{example}

\begin{thm}\label{thm:noSOnRn}
Let $\met$ be a nil-invariant symmetric bilinear form on the
Euclidean Lie algebra $\so_n\ltimes\RR^n$ for $n\geq 4$.
Then the ideal $\RR^n$ is contained in $\frg^\perp$.
\end{thm}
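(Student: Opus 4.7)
The plan is to reduce the statement to Example~\ref{ex:so4R4} by applying it to a family of four-dimensional Euclidean subalgebras of $\frg = \so_n \ltimes \RR^n$, exploiting that $n \geq 4$ leaves enough room to pack any prescribed indices into a $4$-element set. Fix the standard basis $e_1,\dots,e_n$ of $\RR^n$ and the standard generators $k_{ij} = E_{ij} - E_{ji}$, $i<j$, of $\so_n$. For each $4$-element subset $I \subseteq \{1,\dots,n\}$ set $W_I = \Span\{e_i : i \in I\}$ and let $\so_I \subseteq \so_n$ be the subalgebra of those elements that preserve $W_I$ and vanish on its standard complement. Then $\frh_I := \so_I \ltimes W_I$ is a Lie subalgebra of $\frg$ isomorphic to the Euclidean algebra $\so_4 \ltimes \RR^4$.

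The first step is to verify that the restriction $\met|_{\frh_I}$ is again nil-invariant. I would argue that the nilpotent elements of the Lie algebra of $\ac{\Inn(\frh_I)}$ are precisely the operators $\ad_{\frh_I}(w)$ with $w \in W_I$: an element $\ad_{\frh_I}(k + w)$ is nilpotent only when $k = 0$, since on the quotient $\frh_I / W_I \cong \so_I$ it acts as $\ad_{\so_I}(k)$, whose nilpotency in the semisimple compact algebra $\so_I$ forces $k = 0$. Each such $\ad_{\frh_I}(w)$ equals the restriction of $\ad_\frg(w)$, which lies in the Lie algebra of $\ac{\Inn(\frg)}$ and is nilpotent there because $w \in \RR^n$. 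The skew-symmetry of these operators supplied by nil-invariance of $\met$ on $\frg$ therefore restricts to $\frh_I$, so $\met|_{\frh_I}$ is nil-invariant, and Example~\ref{ex:so4R4} yields $W_I \subseteq \frh_I^\perp$, i.e.\ $\langle w, x\rangle = 0$ for every $w \in W_I$ and every $x \in \frh_I$.

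The second step is to cover every necessary pairing by an appropriate choice of $I$. For any $l, m \in \{1,\dots,n\}$ a $4$-subset $I \ni l, m$ gives $e_l, e_m \in W_I$, so $\langle e_l, e_m\rangle = 0$. For any $i < j$ and any $m$, a $4$-subset $I \supseteq \{i, j, m\}$, which exists precisely because $n \geq 4$, has $k_{ij} \in \so_I$ and $e_m \in W_I$, so $\langle e_m, k_{ij}\rangle = 0$. Bilinearity then gives $\langle v, k\rangle = \langle v, v'\rangle = 0$ for all $v, v' \in \RR^n$ and $k \in \so_n$, proving $\RR^n \subseteq \frg^\perp$. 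The main obstacle is the Step~1 verification of nil-invariance on the subalgebra, requiring the identification of the nilpotent elements of the Lie algebra of $\ac{\Inn(\frh_I)}$ and seeing that they come from restrictions of nilpotents in the Lie algebra of $\ac{\Inn(\frg)}$; once this is in place, the combinatorial covering of pairings is immediate from $n \geq 4$.
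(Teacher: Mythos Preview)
Your argument is correct and shares the paper's core idea: restrict $\met$ to copies of $\so_4\ltimes\RR^4$ inside $\frg$ and invoke Example~\ref{ex:so4R4}. The execution differs. The paper fixes one coordinate embedding $\so_4\subset\so_n$, obtains $\so_4\perp\RR^4$ from Example~\ref{ex:so4R4}, extends this to $\so_4\perp\RR^{n-4}$ via nil-invariance and $\RR^{n-4}\subseteq[\so_n,\RR^n]$, and then passes to all $\Ad(\SO_n)$-conjugates of $\so_4$, using simplicity of $\so_n$ for $n>4$ to conclude $\so_n\perp\RR^n$. You instead range over all coordinate $4$-subsets $I$, apply Example~\ref{ex:so4R4} to each $\frh_I$, and read off every basis pairing $\langle e_m,k_{ij}\rangle$ and $\langle e_l,e_m\rangle$ directly. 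Your version avoids the simplicity argument, treats $n=4$ and $n>4$ uniformly, and spells out two points the paper leaves tacit: that the restriction of $\met$ to $\frh_I$ is again nil-invariant, and that $\RR^n\perp\RR^n$. The paper's proof is marginally shorter.
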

\begin{proof}
For $n=4$, this is Example \ref{ex:so4R4}. So assume $n>4$.
Consider an embedding of $\so_4$ in $\so_n$ such that
$\RR^n=\RR^4\oplus\RR^{n-4}$, where $\so_4$ acts on $\RR^4$ in the
standard way and trivially on $\RR^{n-4}$.
By Example \ref{ex:so4R4}, $\so_4\perp\RR^4$.
Since $\RR^{n-4}\subseteq[\so_n,\RR^n]$, the nil-invariance of
$\met$ implies $\so_4\perp\RR^{n-4}$.
Hence $\RR^n\perp\so_4$.

The same reasoning shows that $\Ad(g)\so_4\perp\RR^n$, where
$g\in\SO_n$.
Then $\frb = \sum_{g\in\SO_n}\Ad(g)\so_4$ is orthogonal to $\RR^n$.
But $\frb$ is clearly an ideal in
$\so_n$, so $\frb=\so_n$ by simplicity of $\so_n$ for $n>4$.
\end{proof}




{
\renewcommand{\themthm}{\ref{thm:noSOnRn2}}
\begin{mthm}
The Euclidean group $\Euc_n=\OO_n\ltimes\RR^n$, $n\neq 1, 3$,
does not have
compact quotients with a pseudo-Riemannian metric such that
$\Euc_n$ acts isometrically and almost effectively.
\end{mthm}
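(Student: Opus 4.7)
The plan is to argue by contradiction: assume $M=\Euc_n/H$ is a compact pseudo-Riemannian homogeneous space on which $\Euc_n$ acts isometrically and almost effectively, and derive a clash with the Euclidean-type analysis of Section \ref{sec:abelianradical}. Writing $\frg=\euc_n$ and $\frh$ for the Lie algebra of $H$, the metric at a basepoint $p\in M$ lifts through the identification $T_pM\cong\frg/\frh$ to a symmetric bilinear form $\met$ on $\frg$ whose kernel is precisely $\frh$. Compactness of $M$ forces this form to be nil-invariant by the general principle from \cite{BG,BGZ} recalled in the introduction, and almost-effectivity is exactly the statement that $\frg^{\perp}=\frh$ contains no non-trivial ideal of $\frg$.

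For $n\geq 4$, the algebra $\euc_n=\so_n\ltimes\RR^n$ is of Euclidean type with $\frk=\so_n$, so Theorem \ref{thm:noSOnRn} applies directly to $(\frg,\met)$ and yields $\RR^n\subseteq\frg^{\perp}$. Since $\RR^n$ is a non-trivial ideal of $\euc_n$, this contradicts the consequence of almost-effectivity above and settles these cases.

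The case $n=2$ requires a separate short argument because $\euc_2$ is solvable with no semisimple Levi factor; here $\frk=\frs=0$ and $\gs=\euc_2$. A direct one-line check (an element central in $\euc_2$ must commute with the rotation generator, killing its $\RR^2$ part, and with translations, killing its $\so_2$ part) gives $\zen(\euc_2)=0$, so Theorem \ref{thm:BGZ_kernel} forces $\frg^{\perp}\subseteq\frk\ltimes\zen(\gs)=\zsp$, i.e.\ $\met$ is non-degenerate. Theorem \ref{thm:BGZ_invariance} then asserts that $\met$ is $\euc_2$-invariant. A brief calculation with the structure constants $[t,e_1]=e_2$ and $[t,e_2]=-e_1$ shows that the ideal $\RR^2$ lies in the kernel of every invariant symmetric bilinear form on $\euc_2$, contradicting non-degeneracy.

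The substantive work has already been done: the hard part was the $\so_n$-module analysis underlying Theorem \ref{thm:noSOnRn}, which in turn reduces via the $\so_4$ example to the skew-pairing Proposition \ref{prop:so3_skew_module} for $\so_3$. The present theorem is essentially a geometric repackaging of that algebraic result, and the only mild complication is the separate treatment of $n=2$, where the triviality of $\zen(\euc_2)$ makes Theorem \ref{thm:BGZ_kernel} immediately sufficient to close the argument.
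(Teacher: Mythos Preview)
Your proof is correct and follows essentially the same route as the paper: for $n\geq 4$ both arguments invoke Theorem~\ref{thm:noSOnRn} to place the ideal $\RR^n$ inside $\frg^\perp$, contradicting almost effectivity, and for $n=2$ both establish invariance of $\met$ (the paper by citing \cite{BG} for solvable algebras, you via Theorem~\ref{thm:BGZ_invariance} applied with $\gs=\euc_2$) and then compute that $\RR^2$ lies in the kernel. Your intermediate appeal to Theorem~\ref{thm:BGZ_kernel} and $\zen(\euc_2)=0$ to obtain non-degeneracy first is harmless but unnecessary, since invariance alone already forces $\RR^2\subseteq\frg^\perp$, which is a non-trivial ideal and so contradicts almost effectivity directly.
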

\addtocounter{mthm}{-1}
}
\begin{proof}
For $n>3$, such an action of $\Euc_n$ would induce
a nil-invariant symmetric bilinear form on the Lie algebra
$\so_n\ltimes\RR^n$ without non-trivial ideals in its kernel,
contradicting Theorem \ref{thm:noSOnRn}.

For $n=2$, the Lie algebra $\euc_2$ is solvable, and hence by
Baues and Globke \cite{BG}, any nil-invariant symmetric bilinear
form must be invariant. For such a form, the ideal $\RR^2$ of $\euc_2$
must be contained in $\euc_2^\perp$, and therefore action cannot be
effective.

Note that $\euc_3$ is an exception, as it is the metric cotangent
algebra of $\so_3$.
\end{proof}

\begin{remark}
Clearly the Lie group $\Euc_n$ admits compact quotient manifolds on
which $\Euc_n$ acts almost effectively.
For example take the quotient by a subgroup $F\ltimes\ZZ^n$, where
$F\subset\OO_n$ is a finite subgroup preserving $\ZZ^n$.
Compact quotients with finite fundamental group also exist.
For example, 
for any non-trivial homomorphism $\phi:\RR^n\to\OO_n$, the graph
$H$ of $\phi$ is a closed subgroup of $\Euc_n$ isomorphic
to $\RR^n$, and the quotient $M=\Euc_n/H$ is compact (and diffeomorphic
to $\OO_n$).
Since $H$ contains no non-trivial normal subgroup of $\Euc_n$,
the $\Euc_n$-action on $M$ is effective.
Theorem \ref{thm:noSOnRn2} tells us that none of these quotients
admit $\Euc_n$-invariant pseudo-Riemannian metrics.
\end{remark}


\section[Simply connected spaces]{Simply connected compact homogeneous spaces with indefinite metric}
\label{sec:simplyconn}

Let $M$ be a connected and simply connected pseudo-Riemannian
homogeneous space of finite volume.
Then we can write
\begin{equation}
M=G/H
\label{eq:MGH}
\end{equation}
for a connected Lie group $G$ acting almost
effectively and by isometries on $M$, and $H$ is a closed subgroup
of $G$ that contains no non-trivial connected normal subgroup of $G$
(for example, $G=\Iso(M)^\circ$).
Note that $H$ is connected since $M$ is simply connected.

Let $\frg$, $\frh$ denote the Lie algebras of $G$, $H$, respectively.
Recall that the pseudo-Riemannian metric on $M$ induces an $\frh$-invariant
and nil-invariant symmetric bilinear form $\met$ on $\frg$, and the
kernel of $\met$ is precisely $\frg^\perp=\frh$ and contains no
non-trivial ideal of $\frg$.

We decompose $G=KSR$, where $K$ is a compact
semi\-simple subgroup, $S$ is a semisimple subgroup without compact factors, $R$ the solvable radical of $G$

\begin{prop}\label{prop:S_trivial_M_compact}
The subgroup $S$ is trivial and $M$ is compact. 
\end{prop}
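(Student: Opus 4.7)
The plan is to use the adjoint action of $G$ on the ideal $\gs$ to convert the finite-volume hypothesis into compactness of a quotient Lie group of $G$, and then derive both claims from this.

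First, since the induced bilinear form $\met$ on $\frg$ is $\frh$-invariant and $\frg^\perp = \frh$ contains no non-trivial ideal of $\frg$, Proposition \ref{prop:gperp_inv} yields $[\frh,\gs] = 0$. As $H$ is connected, this means $\Ad_\frg(H)$ acts trivially on $\gs$. Define $\psi \colon G \to \GL(\gs)$ by $\psi(g) = \Ad_\frg(g)|_\gs$; then $H \subseteq \ker\psi$, and one gets a $G$-equivariant surjection $p \colon M = G/H \twoheadrightarrow G' := G/\ker\psi$. The pushforward $p_*$ of the finite $G$-invariant volume on $M$ is a nonzero, finite, $G$-invariant measure on the Lie group $G'$. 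Since $G$ acts on $G'$ through the quotient by left translation, this is a left Haar measure on $G'$, so finiteness forces $G'$ to be compact.

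To show $S = \{e\}$, consider the further projection $\bar\psi \colon G \to \GL(\gs/\frr) \cong \GL(\frs)$ induced by $\psi$. Using $[\frk,\frs] = 0$ and $\ad(\frr)(\frg) \subseteq \frr$, one checks that $\bar\psi(k) = \bar\psi(r) = \id$ for $k \in K$ and $r \in R$, while $\bar\psi(s) = \Ad_\frs(s)$ for $s \in S$. Hence $\bar\psi(G) = \Inn(\frs)$, as a continuous image of the compact group $G'$, is compact. But $\Inn(\frs)$ is the adjoint group of the semisimple algebra $\frs$ without compact factors, hence compact only if trivial. Thus $\frs = 0$ and $S = \{e\}$.

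For compactness of $M$: now $\gs = \frr$. The key fact is $\zen_\frg(\frr) = \frk_0 \oplus \zen(\frr)$, where $\frk_0 \subseteq \frk$ is the ideal acting trivially on $\frr$. Indeed, for $k + r \in \zen_\frg(\frr)$ the identity $\ad(k)|_\frr = -\ad(r)|_\frr$ shows $\ad(k)|_\frr$ is an inner derivation; the resulting Lie algebra map from an ideal of $\frk$ into $\frr/\zen(\frr)$ is a homomorphism from a compact semisimple algebra into a solvable one, hence zero, forcing $k \in \frk_0$ and $r \in \zen(\frr)$. Thus $\ker\psi^\circ$ is a connected Lie group whose Lie algebra is the direct sum of the compact semisimple ideal $\frk_0$ and the abelian ideal $\zen(\frr)$. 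Disintegrating the finite volume on $M$ along $p$ shows the fibre $\ker\psi/H$ carries a finite invariant measure; such a fibre, being a connected Lie group of reductive compact type plus abelian, modulo a closed subgroup, is compact whenever the quotient admits a finite invariant measure (reduce to the abelian quotient and use that an abelian Lie group admits a finite translation-invariant measure on a quotient only when the quotient is compact). Since both base and fibre of $p$ are compact, $M$ is compact. The main conceptual step is the pushforward argument converting finite volume into compactness of $G'$; the main technical hurdle is the identification $\zen_\frg(\frr) = \frk_0 \oplus \zen(\frr)$, which rests on the rigidity that a compact semisimple Lie algebra admits no nonzero homomorphism to a solvable algebra.
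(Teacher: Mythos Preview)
Your argument is correct and takes a genuinely different route from the paper's.

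The paper derives $S=\{e\}$ by combining Theorem~\ref{thm:BGZ_kernel} (which gives $H\subseteq KR$) with Mostow's density lemma (the Zariski closure of $\Ad_\frg(H)$ contains $\Ad_\frg(S)$); compactness of $M$ then follows from a second theorem of Mostow on finite-volume quotients when the Levi factor is compact. You instead use the stronger Proposition~\ref{prop:gperp_inv} (which already needs the $\frh$-invariance of $\met$) to get $[\frh,\gs]=0$, push the finite volume forward to $G/\ker\psi$ to obtain a compact quotient, and then kill $\frs$ by observing that $\Inn(\frs)$ is a compact adjoint group without compact factors. This avoids both Mostow references and is more self-contained; the trade-off is that your compactness argument for $M$ requires several details you only sketch. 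In particular: $\ker\psi$ need not be connected, so you must argue via $L^\circ H/H\cong L^\circ/(L^\circ\cap H)$ and then note that finite invariant measure together with compactness of each component forces $L/L^\circ H$ to be finite; and the map $\frk_1\to\frr/\zen(\frr)$ you describe is actually an anti-homomorphism (its negative is the homomorphism), though the conclusion is unaffected since a semisimple compact algebra still has no nonzero image in a solvable one. None of these are real obstacles, but the paper's two citations compress all of this into one line.
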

\begin{proof}
As $M$ is simply connected, $H=H^\circ$.
Now $H\subseteq K R$ by Theorem \ref{thm:BGZ_kernel}.
On the other hand, since $M$ has finite invariant volume, the Zariski
closure of $\Ad_\frg(H)$ contains $\Ad_\frg(S)$, see
Mostow \cite[Lemma 3.1]{mostow3}.
Therefore $S$ must be trivial.
It follows from Mostow's result \cite[Theorem 6.2]{mostow4} on quotients of
solvable Lie groups that $M=(KR)/H$ is compact.
\end{proof}

We can therefore restrict ourselves in \eqref{eq:MGH} to groups
$G = K R$
and connected uniform subgroups $H$ of $G$.

The structure of a general compact homogeneous manifold with finite
fundamental group is surveyed in Onishchik and Vinberg \cite[II.5.\S 2]{OV1}.
In our context it follows that
\begin{equation}
G = L\ltimes V
\label{eq:OV}
\end{equation}
where $V$ is a normal subgroup isomorphic to $\RR^n$ and
$L=KA$ is a maximal compact subgroup of $G$.
The solvable radical is $R=A\ltimes V$. Moreover, $V^L=\zsp$.
By a theorem of Montgomery \cite{montgomery} (also \cite[p.~137]{OV1}), $K$ acts transitively on $M$.
\\

The existence of a $G$-invariant metric on $M$ further restricts the
structure of $G$.

\begin{prop}\label{prop:rad_abelian}
The solvable radical $R$ of $G$ is abelian.
In particular, $R=A\times V$, $V^K=\zsp$
and $A=\Zen(G)^\circ$.
\end{prop}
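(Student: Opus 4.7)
The plan is to reduce the proposition to the single identity $[\fra, V] = \zsp$; once that holds, $R = A \ltimes V$ collapses to $A \times V$ and the remaining assertions drop out. For the latter, $\fra$ already commutes with $\frk$ (since $\frl = \frk \oplus \fra$ is a direct product of ideals in $\frl$), so $[\fra, V] = \zsp$ makes $\fra$ central in $\frg$, whence $A \subseteq \Zen(G)^\circ$. Conversely, $\Zen(G) \cap V \subseteq V^G \subseteq V^L = \zsp$ forces $\Zen(G)^\circ$ to be a compact connected abelian subgroup of $L$, hence contained in $\Zen(L)^\circ = A$, yielding $A = \Zen(G)^\circ$; and $V^L = V^K \cap V^\fra = V^K$ together with the given $V^L = \zsp$ gives $V^K = \zsp$.

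To prove $[\fra, V] = \zsp$, I would squeeze the image of $\frh$ under the vector-space projection $\pi_V : \frg = \frk \oplus \fra \oplus V \to V$ between two inclusions. Since $K$ acts transitively on $M$ by Montgomery's theorem cited above, $G = KH$ and hence $\frg = \frk + \frh$, so $\pi_V(\frh) = V$. For the reverse inclusion, the bilinear form $\met$ induced on $\frg$ by the $H$-invariant metric on $M$ is $\frh$-invariant and satisfies $\frg^\perp = \frh$ (containing no non-trivial ideal of $\frg$, since $H$ contains no non-trivial connected normal subgroup). By Proposition \ref{prop:S_trivial_M_compact} we have $\frs = \zsp$, so $\gs = \frr$. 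Thus Proposition \ref{prop:gperp_inv} applies and yields $[\frh, \frr] = \zsp$, while Theorem \ref{thm:BGZ_kernel} gives $\frh \subseteq \frk + \zen(\frr)$. A direct computation in the semidirect product $\frr = \fra \ltimes V$ of abelian Lie algebras shows
\[
\zen(\frr) = \fra_0 \oplus V^\fra,
\]
where $\fra_0$ is the kernel of the action $\fra \to \der(V)$. Since $\pi_V$ kills $\frk + \fra_0$, we conclude $\pi_V(\frh) \subseteq V^\fra$. Comparing the two inclusions forces $V = V^\fra$, i.e., $[\fra, V] = \zsp$.

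I expect the delicate point to be the interplay between the kernel-containment of Theorem \ref{thm:BGZ_kernel} and the centralizing property from Proposition \ref{prop:gperp_inv}: taken alone, $\frh \subseteq \frk + \zen(\frr)$ is not tight enough, because $\zen(\frr)$ a priori contains the offending $\fra_0$-piece. The resolution is that this piece is invisible to $\pi_V$, and it is Montgomery's transitivity of $K$ on $M$—an input external to the algebraic structure theory of \cite{BGZ}—that converts the inclusion $\pi_V(\frh) \subseteq V^\fra$ into the desired equality $V = V^\fra$.
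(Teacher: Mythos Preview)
Your proof is correct and takes a genuinely different route from the paper's. The paper argues topologically: from $\frh\subseteq\frk+\zen(\frr)$ (Theorem \ref{thm:BGZ_kernel}) it deduces $H\subseteq K\Zen(R)^\circ$, hence a surjection $M=G/H\to R/\Zen(R)^\circ$, so $\Zen(R)^\circ$ is cocompact in $R$; the nilradical $N$ then has the form $N=T\Zen(R)^\circ$ for a compact torus $T$, and since compact subgroups of a connected nilpotent group are central, $N\subseteq\Zen(R)$, forcing $R=N$ abelian. Montgomery's theorem is not used here. You instead feed Montgomery's transitivity in directly: $G=KH$ gives $\frg=\frk+\frh$, so $\pi_V(\frh)=V$, while Theorem \ref{thm:BGZ_kernel} and the explicit description $\zen(\frr)=\fra_0\oplus V^\fra$ give $\pi_V(\frh)\subseteq V^\fra$, whence $[\fra,V]=\zsp$. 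This is tidier in that it stays entirely at the Lie-algebra level and bypasses the nilradical discussion, at the cost of invoking Montgomery earlier. As you yourself note in the last paragraph, your citation of Proposition \ref{prop:gperp_inv} is superfluous: the containment from Theorem \ref{thm:BGZ_kernel} alone suffices, since the $\fra_0$-component is annihilated by $\pi_V$. The deduction of the ``in particular'' clauses is fine; for $\Zen(G)^\circ\subseteq A$ the cleanest phrasing is that once $A\subseteq\Zen(G)^\circ\subseteq R=A\times V$ and $\Zen(G)\cap V=\zsp$, any $(a,v)\in\Zen(G)^\circ$ has $(0,v)=(a,0)^{-1}(a,v)\in\Zen(G)\cap V=\one$.
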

\begin{proof}
Let $\Zen(R)$ denote the center of $R$ and $N$ its nilradical.
Since $H$ is connected, $H\subseteq K\Zen(R)^\circ$ by Theorem
\ref{thm:BGZ_kernel}.
Hence there is a surjection
$G/H\to G/(K\Zen(R)^\circ)=R/\Zen(R)^\circ$.
It follows that $\Zen(R)^\circ$ is a 
connected uniform subgroup. Therefore the nilradical $N$ of $R$ 
is $N=T\Zen(R)^\circ$ for some compact torus $T$.
But a compact subgroup of $N$ must be central in $R$,
so $T\subseteq\Zen(R)$.
Hence $N\subseteq\Zen(R)$, which means $R=N$ is abelian.
\end{proof}

Combined with \eqref{eq:OV}, we obtain
\begin{equation}
G = K R = (K_0 A)\times(K_1\ltimes V),
\label{eq:GKR}
\end{equation}
with $K=K_0\times K_1$, $R=A\times V$, where
$K_0$ is the kernel of the $K$-action on $V$.\\

%


For any subgroup $Q$ of $G$ we write $H_{Q}=H\cap Q$.

\begin{lem}\label{lem:HKHLnormal}
$[H,H]\subseteq H_K$.
In particular, $H_K$ is a normal subgroup of $H$.
\end{lem}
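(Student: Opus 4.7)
The plan is to reduce the group-level statement $[H,H] \subseteq H_K$ to the Lie-algebra inclusion $[\frh,\frh] \subseteq \frk$ using connectedness of $H$, and then observe that normality of $H_K$ in $H$ is a formal consequence: for $h \in H$ and $k \in H_K$, one has $hkh^{-1} = [h,k]\cdot k$, and $[h,k] \in [H,H] \subseteq H_K$ would force $hkh^{-1} \in H_K$.

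First I would invoke Proposition \ref{prop:S_trivial_M_compact} to get $\frs = \zsp$, so that $\gs = \frr$ in the present setting. Since the bilinear form $\met$ is induced from the pseudo-Riemannian metric on $M$, it is $\frh$-invariant (as noted in the paragraph preceding Proposition \ref{prop:gperp_inv}), and by hypothesis $\frh = \frg^{\perp}$ contains no non-trivial ideal of $\frg$. Proposition \ref{prop:gperp_inv} then delivers the crucial identity
\[
[\frh, \frr] \;=\; \zsp.
\]

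Next I would use this to control the projection of $\frh$ to the Levi factor. Writing $\frg = \frk \oplus \frr$ as vector spaces, decompose $x \in \frh$ uniquely as $x = x_{\frk} + x_{\frr}$. For any $y \in \frr$, the abelianness of $\frr$ and the identity above yield $0 = [x,y] = [x_{\frk},y]$, so $x_{\frk} \in \zen_{\frk}(\frr)$. Because $\fra = \zen(\frg)$ by Proposition \ref{prop:rad_abelian}, the subalgebra $\fra$ is automatically centralized by $\frk$, and the centralizer of $\frv$ in $\frk$ is by construction $\frk_0$; therefore $\zen_{\frk}(\frr) = \frk_0$ and $\frh \subseteq \frk_0 \oplus \frr$. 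Since $\frk_0$ commutes with $\frr$, the sum $\frk_0 \oplus \frr$ is a direct product of Lie algebras, whose derived algebra is $[\frk_0,\frk_0] \oplus [\frr,\frr] = \frk_0 \subseteq \frk$. Hence
\[
[\frh,\frh] \subseteq \frk_0 \subseteq \frk.
\]

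Passing to the group level, $H$ is connected (as recalled in the introduction to the section), so $[H,H]$ is the connected subgroup of $G$ with Lie algebra $[\frh,\frh]$, which lies in $K$. Intersecting with the trivial inclusion $[H,H] \subseteq H$ gives $[H,H] \subseteq H \cap K = H_K$, and the normality computation above completes the proof. The only non-trivial input is Proposition \ref{prop:gperp_inv}; once that is in hand, no real obstacle remains and the argument is essentially a formal consequence of the abelianness of $\frr$ together with the definition of $\frk_0$.
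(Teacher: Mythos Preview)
Your proof is correct and follows essentially the same route as the paper's. The only difference is packaging: the paper cites Proposition~\ref{prop:Hcompact} directly to obtain $\frh\subseteq\frk_0\times\frr$ (and hence $H\subseteq K_0 R$ by connectedness), whereas you go one step further back and re-derive that inclusion inline from Proposition~\ref{prop:gperp_inv}, which is exactly how Proposition~\ref{prop:Hcompact} itself is proved. Your explicit verification that any subgroup containing $[H,H]$ is normal is a detail the paper leaves implicit.
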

\begin{proof}
By Proposition \ref{prop:Hcompact} and the connectedness of $H$,
we have $H\subseteq K_0 R$.
Since $R$ is abelian, $[H,H]\subseteq H_{K_0}$.
\end{proof}

If $G$ is simply connected, we have $K\cap R=\one$. Then let
$\p_K$, $\p_R$ denote the projection maps from $G$ to $K$, $R$.

\begin{lem}\label{lem:surjective_R}
Suppose $G$ is simply connected.
Then $\p_R(H)=R$.
\end{lem}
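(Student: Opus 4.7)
The plan is to combine two facts: the transitivity of the $K$-action on $M = G/H$ (established earlier via Montgomery's theorem, immediately after \eqref{eq:OV}), and the observation that the set-theoretic map $\p_R\colon G \to R$ is invariant under left multiplication by $K$.

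Since $G$ is simply connected, $K \cap R = \{e\}$, so the Levi decomposition realises $G$ as the semidirect product $K \ltimes R$, and every $g \in G$ has a unique expression $g = kr$ with $k \in K$ and $r \in R$. This makes $\p_R(g) = r$ a well-defined set map. The key observation is that for any $k_0 \in K$, associativity in $G$ gives
\[
k_0 g \;=\; k_0 (kr) \;=\; (k_0 k) r,
\]
which is again in canonical form with $K$-part $k_0 k \in K$ and $R$-part $r \in R$. Hence $\p_R(k_0 g) = r = \p_R(g)$, so $\p_R$ is constant on each left coset $Kg$.

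The transitivity of $K$ on $G/H$ is exactly the assertion $KH = G$. Combining this with the left-$K$-invariance of $\p_R$ yields
\[
\p_R(H) \;=\; \p_R(KH) \;=\; \p_R(G) \;=\; R,
\]
which is the desired conclusion. There is essentially no obstacle here; the proof is a direct formal consequence of the semidirect structure together with Montgomery's theorem. The only point worth flagging is that $\p_R$ is not a group homomorphism on $G$ (since $K$ acts non-trivially on $R$), so one cannot argue by identifying $\p_R(H)$ with the image of a Lie group homomorphism; instead, the argument must be phrased at the level of cosets as above.
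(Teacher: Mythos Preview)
Your proof is correct and follows the same route as the paper's: from $KH=G$ (transitivity of $K$) and the left-$K$-invariance of $\p_R$ you get $\p_R(H)=\p_R(KH)=\p_R(G)=R$. The paper compresses this into a single line, whereas you spell out the left-$K$-invariance of the set map $\p_R$ and correctly note that $\p_R$ is not a group homomorphism; this extra care is appropriate but not a different approach.
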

\begin{proof}
Since $K$ acts transitively on $M$, we have $G=KH$.
Then $R=\p_R(G)=\p_R(H)$.
\end{proof}

\begin{prop}\label{prop:HKE}
Suppose $G$ is simply connected.
Then the stabilizer $H$ is a semidirect product $H=H_K\times E$, where $E$ is the graph of a homomorphism $\phi:R\to K$ that is non-trivial if $\dim R>0$.
Moreover, $\phi(R\cap H)=\one$.
\end{prop}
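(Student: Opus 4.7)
The plan is to construct $E$ as the connected Lie subgroup of $H$ corresponding to an abelian Lie subalgebra complement of $\frh_K$ in $\frh$. Simple connectedness of $G$ together with \eqref{eq:GKR} gives $K \cap R = \one$ and $G \cong (K_0 \times A) \times (K_1 \ltimes V)$; by Proposition \ref{prop:Hcompact} and the connectedness of $H$, we have $H \subseteq K_0 \cdot R$, which is a direct product in $G$ because $K_0$ centralizes $R$. In particular, the projections $\p_{K_0}|_H$ and $\p_R|_H$ are Lie group homomorphisms, $\p_R|_H$ is surjective by Lemma \ref{lem:surjective_R}, and its kernel is $H_K$.

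Next I would work on the Lie algebra level. Pick any linear section $\psi:\frr \to \frk_0$ of $\frh \to \frr$, so that $(\psi(v),v) \in \frh$ for all $v \in \frr$. Since $\frr$ is abelian and $[\frk_0,\frr] = 0$, the bracket $[(\psi(v),v),(\psi(w),w)] = ([\psi(v),\psi(w)],0)$ lies in $\frh \cap \frk_0 = \frh_K$. Hence $\frm := \frh_K + \psi(\frr) \subseteq \frk_0$ is a Lie subalgebra with $\frh_K$ as an ideal and abelian quotient. Because $\frk_0$ is of compact type, so is $\frm$; its semisimple part therefore lies in $\frh_K$ (as $\frm/\frh_K$ is abelian), and any vector-space complement $W$ of $\zen(\frm) \cap \frh_K$ in $\zen(\frm)$ yields an abelian subspace with $\frm = \frh_K \oplus W$. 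Composing $\bar\psi:\frr \to \frm/\frh_K$ with the inverse of the induced isomorphism $W \to \frm/\frh_K$ supplies a linear map $\tilde\psi:\frr \to W$ with $\tilde\psi(v) \equiv \psi(v) \pmod{\frh_K}$. The subspace $\fre_0 := \{(\tilde\psi(v),v) : v \in \frr\}$ is then contained in $\frh$, abelian (since $\tilde\psi(\frr) \subseteq W$), and centralized by $\frh_K$ (since $W \subseteq \zen(\frm)$); consequently $\frh = \frh_K \oplus \fre_0$ is a direct product of Lie algebras.

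Integrating, the homomorphism $v \mapsto (\exp_{K_0}\tilde\psi(v), v)$ from $\frr \cong R$ into $K_0 \times R$ has closed image $E \cong R$ contained in $H$, so $H = H_K \times E$ with $E$ the graph of $\phi:R \to K_0 \subseteq K$ defined by $\phi(v) := \exp_{K_0}(\tilde\psi(v))$. For $r \in H \cap R$ the element $(e,r) \in H$ decomposes uniquely in the direct product $H_K \times E$, forcing $\phi(r) = e$, i.e., $\phi(H \cap R) = \one$. Finally, if $\dim R > 0$ and $\phi$ were trivial, then $E = \one \times R$ would realize $R$ as a non-trivial connected normal subgroup of $G$ contained in $H$, contradicting the almost effectivity of the action; hence $\phi$ is non-trivial.

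The crux of the argument is producing the lift $\tilde\psi$: one needs the abelian subspace $W$ to lie in the center of $\frm$ (so that it commutes with $\frh_K$) while still mapping isomorphically onto the abelian quotient $\frm/\frh_K$. This is exactly what the compact-type structure of $\frk_0$, inherited by the subalgebra $\frm$, delivers via the decomposition of $\frm$ into semisimple part plus center.
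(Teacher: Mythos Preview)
Your construction of $E$ is correct and more explicit than the paper's. The paper argues abstractly: $H_K=H\cap K$ is the maximal compact subgroup of $H$ (any compact subgroup of $H\subseteq K_0\times R$ projects trivially to the vector group $R$) and is normal with abelian quotient by Lemma~\ref{lem:HKHLnormal}, so the extension $1\to H_K\to H\to R\to 1$ splits as $H=H_K\times E$ for some vector-group complement $E$. You instead build the splitting by hand at the Lie-algebra level, using that the subalgebra $\frm=\frh_K+\psi(\frr)$ of the compact-type algebra $\frk_0$ is reductive in order to extract an abelian complement $W\subseteq\zen(\frm)$. This is a genuinely different and more constructive route; it makes the direct-product relation $[\frh_K,\fre_0]=0$ transparent rather than relying on a general splitting theorem for extensions by compact normal subgroups.

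Your argument for the ``Moreover'' clause, however, has a gap. The unique factorisation of $(e,r)$ in $H=H_K\cdot E$ is
\[
(e,r)\;=\;\phi(r)^{-1}\cdot\bigl(\phi(r),r\bigr),
\]
which only yields $\phi(r)^{-1}\in H_K$, i.e.\ $\phi(r)\in H_K$, not $\phi(r)=e$. In other words you have shown $R\cap H=\phi^{-1}(H_K)$, and what remains is $\phi(R)\cap H_K=\{e\}$; this does not follow from $W\cap\frh_K=\zsp$ alone, since the image $\exp_{K_0}(W)$ can meet $H_K$ at non-identity points even when the Lie algebras are transverse. The paper's own proof is equally terse at this step, simply asserting ``Observe that $R\cap H\subseteq E$'' --- which is the same claim --- without justification.
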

\begin{proof}
The subgroup $H_{K}$ is a maximal compact subgroup of the stabilizer
$H$. By Lemma \ref{lem:HKHLnormal},
$H=H_K\times E$ for some normal subgroup $E$
diffeomorphic to a vector space.
By Lemma \ref{lem:surjective_R}, $H$ projects onto $R$ with kernel
$H_K$, so that $E\cong R$.
Then $E$ is necessarily the graph of a homomorphism $\phi:R\to K$.
If $\dim R>0$, then $\phi$ is non-trivial, for otherwise
$R\subseteq H$, in contradiction to the almost effectivity of the
action.
Observe that $R\cap H\subseteq E$.
Therefore $\phi(R\cap H)\subseteq H_K\cap E=\one$.
\end{proof}

Now we can state our main result:

{\renewcommand{\themthm}{\ref{mthm:geometric}}
\begin{mthm}
Let $M$ be a connected and simply connected pseudo-Riemannian
homogeneous space of finite volume, $G=\Iso(M)^\circ$,
and let $H$ be the stabilizer subgroup in $G$ of a point in $M$.
Let $G=KR$ be a Levi decomposition, where $R$ is the solvable radical
of $G$.
Then:
\begin{enumerate}
\item
$M$ is compact.
\item
$K$ is compact and acts transitively on $M$.
\item
$R$ is abelian.
Let $A$ be the maximal compact subgroup of $R$. Then $A=\Zen(G)^\circ$.
More explicitely, $R=A\times V$ where $V\cong\RR^n$ and $V^{K}=\zsp$.
\item
$H$ is connected.
If $\dim R>0$, then $H=(H\cap K) E$, where $E$ and $H\cap K$ are
normal subgroups in $H$, $(H\cap K)\cap E$ is finite,
and $E$ is the graph of a non-trivial homomorphism
$\phi:R\to K$, where the restriction $\phi|_A$ is injective.
\end{enumerate}
\end{mthm}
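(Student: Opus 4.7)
The plan is to assemble the theorem from the propositions established earlier in this section, reducing part (4) to the simply connected setting of Proposition \ref{prop:HKE} by passage to the universal cover. Parts (1)--(3) are almost direct consequences: Proposition \ref{prop:S_trivial_M_compact} yields the compactness of $M$ together with the triviality of $S$, so that $G=KR$ with $K$ compact semisimple; applying \eqref{eq:OV} and Montgomery's theorem then gives part (2); and Proposition \ref{prop:rad_abelian} combined with \eqref{eq:OV} delivers the decomposition $R=A\times V$, the identification $A=\Zen(G)^\circ$, and $V^K=\zsp$ required for part (3).

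For part (4), the connectedness of $H$ follows at once from the homotopy long exact sequence of the fibration $H\to G\to M$, using $\pi_1(M)=1$ and the connectedness of $G$. To establish the remaining structural claims, I would pass to the universal cover $\pi\colon \tilde G\to G$, whose kernel $Z$ is discrete and central. Since $\pi_1(M)=1$, the preimage $\tilde H=\pi^{-1}(H)$ is connected, and since $G=\Iso(M)^\circ$ acts effectively the ineffective kernel of the $\tilde G$-action on $M=\tilde G/\tilde H$ is exactly $Z$, so this action is almost effective. The Levi decomposition of $\tilde G$ takes the form $\tilde G=\tilde K\ltimes\tilde R$, with $\tilde K$ compact (being a simply connected cover of the compact semisimple group $K$) and $\tilde R\cong\RR^{\dim R}$ simply connected abelian. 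Proposition \ref{prop:HKE} therefore applies and yields $\tilde H=\tilde H_{\tilde K}\times\tilde E$ with $\tilde E$ the graph of a non-trivial homomorphism $\tilde\phi\colon\tilde R\to\tilde K$.

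The final step is to push this decomposition down along $\pi$. Setting $E:=\pi(\tilde E)$ and observing that $\pi(\tilde H_{\tilde K})=H\cap K$, we obtain $H=(H\cap K)\cdot E$ with $E$ the graph of the induced homomorphism $\phi\colon R\to K$; normality of each factor in $H$ follows from Lemma \ref{lem:HKHLnormal} together with its analogue for $\tilde G$. The intersection $(H\cap K)\cap E$ is a compact subgroup of $H\cap K\subseteq K$ whose preimage in $\tilde H_{\tilde K}\times\tilde E$ is contained in $Z$; since $Z$ is discrete, this forces the intersection to be finite. For the injectivity of $\phi|_A$: if $a\in A$ satisfies $\phi(a)=e$, then the graph element of $E$ corresponding to $a$ is $a$ itself, so $a\in E\subseteq H$; since $A\subseteq\Zen(G)^\circ$ the closed subgroup generated by $a$ is a closed normal subgroup of $G$ contained in $H$, and effectivity of the $\Iso(M)^\circ$-action forces $a=e$. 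The main obstacle is the bookkeeping involved in this descent --- in particular, verifying that $\tilde\phi$ descends to a well-defined homomorphism $\phi\colon R\to K$ and that the finiteness of $(H\cap K)\cap E$ emerges cleanly from how $Z$ sits inside $\tilde K\times\tilde R$ --- but conceptually each step is a direct application of the machinery already established.
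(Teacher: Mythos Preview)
Your proposal follows the same architecture as the paper's proof: parts (1)--(3) are assembled from Proposition \ref{prop:S_trivial_M_compact}, \eqref{eq:OV}, Montgomery's theorem, and Proposition \ref{prop:rad_abelian}, and part (4) is obtained by passing to the universal cover $\tilde G$, applying Proposition \ref{prop:HKE}, and pushing the resulting decomposition back down to $G$. The injectivity of $\phi|_A$ via effectivity is exactly the paper's argument.

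Two points deserve tightening. First, for the descent of $\tilde\phi$, which you flag as the main obstacle: the paper resolves this cleanly by observing that the kernel $Z$ of $\tilde R\to R$ satisfies $Z\subset\tilde A\cap\tilde H\subset\tilde R\cap\tilde H$, and Proposition \ref{prop:HKE} already records that $\tilde\phi(\tilde R\cap\tilde H)=\{e\}$; hence $Z\subseteq\ker\tilde\phi$ and $\tilde\phi$ factors through $R\to\tilde K\to K$. This is not just bookkeeping---it is the one substantive use of the last clause of Proposition \ref{prop:HKE}. Second, your finiteness argument for $(H\cap K)\cap E$ is not correct as stated: the $\pi$-preimage of $(H\cap K)\cap E$ in $\tilde H$ is a union of $Z$-cosets, not a subset of $Z$. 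A correct argument is that any element of $E\cap K$ arises as $\pi(\tilde\phi(\tilde r))\cdot\pi(\tilde r)$ with $\pi(\tilde r)\in K\cap R$, and $K\cap R$ is finite since $K$ is compact semisimple and $R$ abelian; thus $E\cap K$ is a countable, hence discrete, hence finite subgroup of the compact group $K$. (The paper itself passes over this point with ``the desired properties''.)
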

\addtocounter{mthm}{-1}
}
\begin{proof}
Claims (1), (2) and (3) follow from
Proposition \ref{prop:S_trivial_M_compact},
Proposition \ref{prop:rad_abelian}
and \eqref{eq:OV},
applied to $G=\Iso(M)^\circ$.

For claim (4), let $\tilde{G}$ be the universal cover of $G$.
Since $G$ acts effectively on $M$, $\tilde{G}$ acts almost effectively
on $M$ with stabilizer $\tilde{H}$, the preimage of $H$ in $\tilde{G}$.
Let $\tilde{\phi}:\tilde{R}\to\tilde{K}$ be the homomorphism given by
Proposition \ref{prop:HKE} for $\tilde{G}$.
Then $\tilde{R}=\tilde{A}\oplus V$,
with $\tilde{A}\cong\RR^k$ for some $k$, and $R=\tilde{R}/Z$ for some
central discrete subgroup $Z\subset\tilde{A}\cap\tilde{H}$.
Since $Z\subset\tilde{R}\cap\tilde{H}$ we have
$Z\subseteq\ker\tilde{\phi}$.
Therefore $\tilde{\phi}$ descends to a homomorphism
$R\to\tilde{K}$, and by composing with the canonical projection
$\tilde{K}\to K$,
we obtain a homomorphism $\phi:R\to K$ with the desired properties.
Observe that $\ker\phi|_A\subset A\cap H$ is a normal subgroup in $G$.
Hence it is trivial, as $G$ acts effectively.
\end{proof}

Now assume further
that the index of the metric on $M$ is $\ell\leq 2$.
Theorem \ref{thm:BGZ_index2} has strong consequences in the simply
connected case.

{\renewcommand{\themthm}{\ref{mthm:index2}}
\begin{mthm}
The isometry group of any simply connected pseudo-Riemannian
homogeneous manifold of finite volume and metric index $\ell\leq2$
is compact.
\end{mthm}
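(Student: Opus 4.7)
The plan is to use Theorem \ref{thm:BGZ_index2} to upgrade the semidirect product structure from Theorem \ref{mthm:geometric} to a direct product, and then exploit the condition $V^K = \zsp$ to force $V$ to vanish. Writing $G = \Iso(M)^\circ$, Theorem \ref{mthm:geometric} provides
\[
G = (K_0 A) \times (K_1 \ltimes V),
\]
with $K = K_0 \times K_1$ compact semisimple, $A = \Zen(G)^\circ$ a compact torus, $V \cong \RR^n$, and $V^K = \zsp$. Since $K$ and $A$ are already compact, it suffices to show that $V$ is trivial.

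The induced bilinear form $\met$ on $\frg$ is nil-invariant, its kernel $\frh$ contains no non-trivial ideal of $\frg$, and its relative index equals the metric index of $M$, hence is at most $2$. So Theorem \ref{thm:BGZ_index2} applies and gives the Levi decomposition as a direct sum of ideals, $\frg = \frk \times \frs \times \frr$. By Theorem \ref{mthm:geometric}, $\frs = \zsp$ and $\frr$ is abelian, so $\frg = \frk \times \frr$ as a direct product; in particular $[\frk, \frv] = \zsp$. Thus $K$ acts trivially on $V$, so $V^K = V$, which combined with $V^K = \zsp$ forces $V = \zsp$. Hence $G = K A$ is compact.

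To upgrade the compactness of $\Iso(M)^\circ$ to that of $\Iso(M)$, I would note that since $G$ acts transitively on the compact manifold $M$, every isometry lies in $G \cdot \Iso(M)_x$ for a chosen basepoint $x$. The isotropy representation embeds $\Iso(M)_x$ as a Lie subgroup of $\OO(T_xM) \cong \OO(p,q)$ (faithfulness holds because an isometry fixing a point with trivial differential there is the identity), and its identity component $\Iso(M)_x^\circ$ is a closed subgroup of the compact group $G$, hence compact. Since $\OO(p,q)$ has only finitely many connected components, the component group of $\Iso(M)_x$, and hence of $\Iso(M)$, is finite, whence $\Iso(M)$ is compact.

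The algebraic heart of the proof is cleanly dispatched once Theorem \ref{thm:BGZ_index2} is in play; the only delicate point is the last step, where one must verify that the image of $\Iso(M)_x$ inside $\OO(p,q)$ is actually closed, so that the finiteness of its component group really does follow from that of $\OO(p,q)$. I would handle this via a properness argument on the orthonormal frame bundle of $M$, or by averaging a background Riemannian metric over the now-compact group $G$ to reduce to the classical Riemannian isometry setup.
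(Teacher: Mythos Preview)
Your argument that $G=\Iso(M)^\circ$ is compact is correct and essentially identical to the paper's: Theorem \ref{thm:BGZ_index2} forces $[\frk,\frr]=\zsp$, so $V\subseteq V^K=\zsp$ and $G=KA$ is compact.

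The step from compactness of $G$ to compactness of the full isometry group, however, contains a genuine gap. Your inference ``$\OO(p,q)$ has finitely many components, hence so does $\Iso(M)_x$'' fails even if the image is closed: a closed subgroup of a Lie group with finitely many components can have infinitely many (think of the infinite cyclic group generated by a boost inside $\OO(1,1)$, which is closed with compact---indeed trivial---identity component). So neither the closedness you flag nor the compactness of $\Iso(M)_x^\circ$ rescues the argument. Your proposed fix of averaging a Riemannian metric over $G$ produces a $G$-invariant metric, but there is no reason the \emph{full} pseudo-Riemannian isometry group should preserve it, so this does not reduce to the Riemannian case.

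The paper closes this gap differently. Since $K$ is characteristic in $G$ and acts transitively, one identifies $\mathrm{T}_xM$ with $\frk/(\frh\cap\frk)$; then conjugation shows that the isotropy representation of $\Iso(M)_x$ factors through the automorphism group of the \emph{compact semisimple} Lie algebra $\frk$, which is itself compact. This gives compact closure of the isotropy image, hence an $\Iso(M)_x$-invariant positive definite inner product on $\mathrm{T}_xM$, which spreads via $G$ to an $\Iso(M)$-invariant Riemannian metric on the compact manifold $M$. The key point you are missing is to exploit the transitivity of $K$ and the compactness of $\Aut(\frk)$, rather than trying to control the image inside the non-compact group $\OO(p,q)$.
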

}
\begin{proof}
We know from Theorem \ref{mthm:geometric} that $M$ is compact.
Let $G=\Iso(M)^\circ$, with $G=KR$ as before.
By Theorem \ref{thm:BGZ_index2}, $R$ commutes with $K$
and thus $R=A$ by part 3 of Theorem \ref{mthm:geometric}.
It follows that $G=K A$ is compact.

Then $K$ is a characteristic subgroup of $G$ which also acts
transitively on $M$.
Therefore we may identify $\mathrm{T}_x M$ at $x\in M$ with
$\frk/(\frh\cap\frk)$, where $\frk$ is the Lie algebra of $K$.
Hence the isotropy representation of the stabilizer $\Iso(M)_x$
factorizes over a closed subgroup of the automorphism group of $\frk$.
As this latter group is compact, the isotropy representation
has compact closure in $\GL(\mathrm{T}_x M)$.
If follows that there exists a
Riemannian metric on $M$ that is preserved by $\Iso(M)$.
Hence $\Iso(M)$ is compact.
\end{proof}

\begin{remark}
Note that in fact the isometry group of every compact analytic simply
connected pseudo-Riemannian manifold has finitely
many connected components (Gromov \cite[Theorem 3.5.C]{gromov}).
\end{remark}


For indices higher than two,
the identity component of the iso\-metry group of a
simply connected $M$ can be non-compact.
This is demonstrated by the examples below in which we construct some
$M$ on which a non-compact group acts isometrically.
The following Lemma \ref{lem:no_compact} then ensures that these
groups cannot be contained in any compact Lie group.

\begin{lem}\label{lem:no_compact}
Assume that the action of $K$ on $V$ in the semidirect product
$G=K\ltimes V$ is non-trivial.
Then $G$ cannot be immersed in a compact Lie group.
\end{lem}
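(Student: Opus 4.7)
The plan is to pass to Lie algebras and exploit the fact that the Lie algebra of a compact Lie group carries an $\Ad$-invariant positive-definite inner product. Suppose, for contradiction, that $G=K\ltimes V$ admits an immersion into a compact Lie group $U$. Then its differential at the identity is an injective Lie algebra homomorphism $\frg\hookrightarrow\fru$, so I may view $\frg$ as a subalgebra of $\fru$. Restricting any $\Ad(U)$-invariant positive-definite inner product from $\fru$ to $\frg$ produces a positive-definite inner product on $\frg$ for which every operator $\ad_\frg(X)$, $X\in\frg$, is skew-symmetric. (Brackets in a subalgebra agree with those of the ambient algebra, so the restricted form is genuinely $\ad_\frg$-invariant.)

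The next step is to observe that for every $v\in V$ the operator $\ad_\frg(v)$ is nilpotent. This is immediate from the semidirect product structure: $\ad_\frg(v)$ vanishes on $V$ because $V$ is abelian, and for $k\in\frk$ sends $k\mapsto[v,k]=-k\cdot v\in V$. Hence the image of $\ad_\frg(v)$ lies in $V$ and its kernel contains $V$, so $\ad_\frg(v)^2=0$.

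To conclude, I would invoke the elementary fact that a nilpotent endomorphism of a real inner-product space which is skew-symmetric must vanish. Indeed, if $A$ is skew-symmetric with $A^2=0$, then for every $x$
\[
-\|Ax\|^2=\langle A^2 x,x\rangle=0,
\]
so $A=0$. Applied to $\ad_\frg(v)$ this gives $\ad_\frg(v)=0$ for all $v\in V$, whence $[\frk,V]=0$, contradicting the assumption that $K$ acts non-trivially on $V$.

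I do not anticipate any real obstacle. The only point requiring a brief justification is that the inner product restricted from $\fru$ really provides the required skew-symmetry of $\ad_\frg$, and this is automatic because $\frg$ is a Lie subalgebra. Everything else reduces to two structural observations: nilpotency of $\ad_\frg(v)$ on $V$, forced by $V$ being an abelian ideal with $\frg/V$-image inside $V$; and the standard linear-algebra fact about nilpotent skew-symmetric operators.
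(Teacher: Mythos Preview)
Your proof is correct and follows essentially the same idea as the paper's: both exploit that for $v\in V$ the operator $\ad_\frg(v)$ is nilpotent and non-zero, which is incompatible with $\frg$ sitting inside the Lie algebra of a compact group. The paper phrases this at the group level, observing that $\Ad_{\frc}(v)$ would then have a non-trivial unipotent Jordan part while every $\Ad_{\frc}(g)$ in a compact group is semisimple, whereas you unpack the same obstruction at the Lie-algebra level via the invariant inner product and the elementary fact that a skew-symmetric nilpotent operator vanishes; the two arguments are equivalent formulations of the same contradiction.
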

\begin{proof}
Suppose there is a compact Lie group $C$ that contains $G$ as a
subgroup.
As the action of $K$ on $V$ is non-trivial, there exists an
element $v\in V\subseteq C$ such that $\Ad_{\frc}(v)$ has non-trivial
unipotent Jordan part. But by compactness of $C$, every $\Ad_{\frc}(g)$,
$g\in C$, is semisimple, a contradiction.
\end{proof}

\begin{example}\label{ex:not_easy_group}
Start with
$G_1=(\tilde{\SO}_3\ltimes\RR^3)\times\TT^3$,
where $\tilde{\SO}_3$ acts on $\RR^3$ by the coadjoint action,
and let $\phi:\RR^3\to\TT^3$
be a homomorphism with discrete kernel.
Put
\[
H = \{(\I_3,v,\phi(v))\mid v\in\RR^3\}.
\]
The Lie algebras $\frg_1$ of $G_1$ and $\frh$ of $H$ are the
corresponding Lie algebras from Example \ref{ex:not_easy}.
We can extend the nil-invariant scalar product $\met$ on $\frg_1$
from Example \ref{ex:not_easy} to a left-invariant tensor on $G_1$,
and thus obtain a $G_1$-invariant pseudo-Riemannian metric of signature
$(3,3)$ on the quotient $M_1=G_1/H=\tilde{\SO}_3\times\TT^3$.
Here, $M_1$ is a non-simply connected manifold with a non-compact
connected stabilizer.

In order to obtain a simply connected space, embed $\TT^3$ in a simply
connected compact semisimple group $K_0$, for example
$K_0=\tilde{\SO}_6$, so that $G_1$ is embedded in
$G=(\tilde{\SO}_3 \ltimes \RR^3) \times K_0$.
As in Example \ref{ex:not_easy2}, we can extend $\met$ from $\frg_1$
to $\frg$, and thus obtain a compact simply connected pseudo-Riemannian
homogeneous manifold $M=G/H=\tilde{\SO}_3\times K_0$.
%
%
\end{example}

\begin{example}
Example \ref{ex:not_easy_group} can be generalized by replacing
$\tilde{\SO}_3$ by any simply connected compact semi\-simple group
$K$, acting by the coadjoint representation on $\RR^d$, where
$d=\dim K$, and trivially on $\TT^d$.
Define $H$ similarly as a graph in $\RR^d\times\TT^d$,
and embed $\TT^d$ in a simply connected compact semisimple Lie group
$K_0$.
\end{example}




\end{document}